\def\BibTeX{{\rm B\kern-.05em{\sc i\kern-.025em b}\kern-.08em
		T\kern-.1667em\lower.7ex\hbox{E}\kern-.125emX}}
\newcommand{\todo}[1]{\textcolor{red}{To Do: #1}}
\newcommand\footnoteref[1]{\protected@xdef\@thefnmark{\ref{#1}}\@footnotemark}
\newcommand{\reals}{\mathbb{R}}
\DeclareMathOperator*{\argmin}{arg\,min}
\newtheorem{theorem}{Theorem}
\newtheorem{assumption}{Assumption}
\newtheorem{lemma}{Lemma}
\newtheorem{definition}{Definition}
\newtheorem{corollary}{Corollary}
\newtheorem{remark}{Remark}
\definecolor{lcolor}{rgb}{0,0,0.6}
\newcommand{\regularversion}[1]{\iffalse{}#1\fi}
\newcommand{\extendedversion}[1]{{\color{black}#1}}
\begin{document}

\title{{\fontsize{16}{16} \bf Predictive Control Barrier Functions for Online Safety Critical Control}}%

\author{Joseph Breeden \and Dimitra Panagou\thanks{The authors are with the Department of Aerospace Engineering, University of Michigan, Ann Arbor, MI, USA. Email: \texttt{\{jbreeden,dpanagou\}@umich.edu}.}\thanks{The authors thank the National Science Foundation Graduate Research Fellowship Program for supporting this research.}}

\maketitle

\begin{abstract}
	
This paper presents a methodology for constructing Control Barrier Functions (CBFs) that proactively consider the future safety of a system along a nominal trajectory, and effect corrective action before the trajectory leaves a designated safe set. Specifically, this paper presents a systematic approach for propagating a nominal trajectory on a receding horizon, and then encoding the future safety of this trajectory into a CBF. If the {propagated} trajectory is unsafe, then a controller satisfying the CBF condition will modify the nominal trajectory before the trajectory becomes unsafe. Compared to existing CBF techniques, this strategy is proactive rather than reactive and thus potentially results in smaller modifications to the nominal trajectory. The proposed strategy is shown to be provably safe, and then is demonstrated in simulated scenarios where it would otherwise be difficult to construct a traditional CBF. In simulation, the predictive CBF results in less modification to the nominal trajectory and smaller control inputs than a traditional CBF, and faster computations than a nonlinear model predictive control approach.


\end{abstract}

\vspace{-3pt}
\section{Introduction}

{Control Barrier Functions (CBFs) are a tool for achieving safe control of state-constrained systems in an online fashion \cite{CBF_Tutorial}.}
Specifically, CBFs are functions of the system state that motivate an affine condition on the instantaneous control input that, if satisfied pointwise, guarantees forward invariance of a given set of \textit{safe} states. We call this condition the \textit{CBF condition} and this set the \textit{safe set}. For example, {the state of a car on a shared road should belong to the set of states that are a minimum separation distance from every other car.}

However, one drawback of CBFs is that the CBF condition is reactive (frequently called \emph{myopic} \cite{approximate_optimal_control}); i.e., CBFs only consider the safety of the current state and state derivative, rather than considering future objectives of the system. 
For this reason, the CBF condition may allow trajectories to reach states where large control inputs are required to maintain safety \cite{approximate_optimal_control,multilayer_icra}, or when control inputs are constrained, CBF safe sets are often overly conservative, requiring large evasive actions (e.g., \cite{Automatica}). For instance, CBFs may cause two cars meeting at an intersection to both decelerate to a complete stop to maintain {safety}, whereas it would be more efficient for only one car to decelerate slightly, allowing both cars to {pass} without stopping. To address this and similar situations, we propose the notion of Predictive CBFs (PCBFs). 

Intuitively, PCBFs are functions that encode both the present and future safety of the system in a model-predictive manner, while still providing the convenient CBF condition on the current control input. Specifically, the method in this paper supposes a known nominal trajectory, herein called a \emph{path}, not necessarily safe, that the agent is to follow, and {computes} a measure of the safety of the path on a receding horizon. {This measure is then encoded into a special structure of CBF}. If the {safety} measure is positive {(i.e. unsafe)}, then the resultant CBF condition causes the controller to modify the nominal control input {so as} to render the trajectory safe.

PCBFs are closely related to Model Predictive Control (MPC) \cite{mpc_with_backup_aircraft,multirate_design_cbfs_mpc,Discrete_MPC_CBFs,chemical_control_mpc_cbf,multilayer_icra,discrete_backup_cbf} and to the notion of Backup CBFs \cite{Predictive_CBF,backup_controller,backup_control_multi_robot,CDC21,backup_tutorial,discrete_backup_cbf}, which both also consider safety on a receding horizon, but PCBFs provide distinct advantages. 
Unlike MPC, the CBF condition is always control-affine, even when the dynamics and constraints are nonlinear. The CBF condition also only applies to the current control input rather than to a predicted horizon of control inputs. Thus, the control input under a PCBF can be computed using a Quadratic Program (QP) with small dimension and affine constraints, which can be solved more easily than nonlinear MPC (e.g. \cite{Discrete_MPC_CBFs,multilayer_icra}). Moreover, as the PCBF approach does not introduce a fixed discretization, safety is guaranteed at all times in the prediction horizon rather than just at the MPC sampled times.\extendedversion{{} That said, the PCBF does not yet support sampled-data controllers, as does MPC.}

%

Compared to Backup CBFs, the PCBF works by propagating the system trajectory according to its nominal objectives, which are allowed to be unsafe, rather than according to some backup safety plan. Thus, a backup controller or backup set does not need to be known a priori \cite{backup_controller}, and the system trajectories are not limited to evolving near the backup set, which may be overly conservative. 
\extendedversion{Both Backup CBFs and PCBFs can be constructed from a safety measure of any relative-degree. }%
That said, unlike Backup CBFs, PCBFs do not guarantee input constraint satisfaction, though this is an area for future study. 

Another recent approach for making CBFs consider the future system evolution is to introduce an additional constraint that {bounds the} future control inputs generated by a CBF \cite{feasible_merging_cbfs,feasible_hocbfs}. One can further argue \cite[Sec. III-A]{CDC21} that any Higher Order CBF \cite{Exponential_CBF,hocbf_journal,Automatica} also implies some degree of look-ahead, though this look-ahead is implicit in the choice of class-$\mathcal{K}$ functions and thus may be difficult to tune. The authors in \cite{safe_learning_cbfs_proactive} also found that learned policies tended to proactively avoid unsafe regions better than CBFs alone. This paper supplements all of these works by looking ahead only along a specified path rather than along all possible {safe} trajectories. This work is also closely related to the work in \cite{mitchells_paper}, but is generalized to arbitrary dynamics and paths.

This paper is organized as follows. Section~\ref{sec:preliminaries} introduces preliminaries on CBFs. Section~\ref{sec:method} formalizes the notion of a nominal path (\emph{path function}), introduces a predictive safety function, and proves that this function is a CBF. Section~\ref{sec:simulations} presents simulations and comparisons that show how this function is proactive. Section~\ref{sec:conclusion} presents concluding remarks.

\section{Preliminaries} \label{sec:preliminaries}

\subsection{Notations}

For a differentiable function ${\psi}(t,x)$, let $\partial_t {\psi}(t,x)$ denote the derivative in $t$ 
and let $L_f {\psi}(t,x)$ denote the Lie derivative (directional derivative) along the vector field $f$ at the point $(t,x)$. Denote the total derivative $\frac{d}{dt}{\psi} = \partial_t {\psi} + L_{\dot{x}} {\psi}$. 
%
%
%
Let $\| \cdot \|$ denote the 2-norm. Let $\mathbb{P}'(\mathcal{S})$ denote the set of all subsets of $\mathcal{S}$ {(power set of $\mathcal{S}$)}, and let $\mathbb{P}(\mathcal{S})$ denote the subset of $\mathbb{P}'(\mathcal{S})$ such that each $\mathcal{P}\in\mathbb{P}(\mathcal{S})$ contains finitely many elements. Let $|\mathcal{S}|$ denote the cardinality of set $\mathcal{S}$. A function $\alpha:\reals\rightarrow\reals$ is said to belong to extended class-$\mathcal{K}$, denoted $\alpha \in \mathcal{K}$, if it is strictly increasing and $\alpha(0) = 0$. 
We first note the following consequence of class-$\mathcal{K}$ functions.

\begin{lemma}
\label{lemma:class_k}
Suppose $z : [t_0, \infty) \rightarrow \reals$ is absolutely continuous and satisfies $\dot{z}(t) = \alpha(-z(t))$ for almost every $t\in[t_0,\infty)$, where $\alpha \in \mathcal{K}$. If $z(t_0) \leq 0$, then $z(t) \leq 0$ for all $t \geq t_0$.
\end{lemma}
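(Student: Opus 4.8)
The plan is to argue by contradiction, exploiting the fact that $z = 0$ is an equilibrium of the dynamics and that the sign of $\dot{z}$ always points back toward $0$. Since $\alpha \in \mathcal{K}$ is strictly increasing with $\alpha(0) = 0$, whenever $z(t) > 0$ we have $-z(t) < 0$ and hence $\dot{z}(t) = \alpha(-z(t)) < \alpha(0) = 0$; so the flow can never escape upward through the level $z = 0$. I would turn this qualitative observation into a rigorous statement by tracking the last time the trajectory touches $0$ and integrating.

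Concretely, suppose for contradiction that $z(t_1) > 0$ for some $t_1 > t_0$. Because $z$ is absolutely continuous it is in particular continuous, and since $z(t_0) \leq 0$, the set $\{ t \in [t_0, t_1] : z(t) \leq 0 \}$ is nonempty and closed; let $\tau$ denote its supremum. Then $\tau < t_1$, and by continuity $z(\tau) = 0$ while $z(t) > 0$ for every $t \in (\tau, t_1]$. On this interval strict monotonicity of $\alpha$ gives $\alpha(-z(t)) < \alpha(0) = 0$ for almost every $t$. Invoking the fundamental theorem of calculus for absolutely continuous functions together with $\dot{z}(t) = \alpha(-z(t))$ a.e., I would write
\begin{equation*}
z(t_1) = z(\tau) + \int_\tau^{t_1} \dot{z}(s)\, ds = \int_\tau^{t_1} \alpha(-z(s))\, ds < 0,
\end{equation*}
where the integral is strictly negative because its integrand is negative almost everywhere on an interval of positive length. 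This contradicts $z(t_1) > 0$, so no such $t_1$ exists and $z(t) \leq 0$ for all $t \geq t_0$.

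I expect the main obstacle to be measure-theoretic bookkeeping rather than the conceptual argument. The key point that absolute continuity (not merely a.e. differentiability) is what licenses the integral representation $z(t_1) - z(\tau) = \int_\tau^{t_1} \dot{z}\,ds$, and one must be careful that $\dot{z}(t) = \alpha(-z(t))$ only holds almost everywhere, so the argument should avoid any pointwise reasoning about $\dot{z}$ and instead pass immediately to the integral. The other detail requiring care is pinning down $z(\tau) = 0$: this follows from continuity by noting $z(\tau) \leq 0$ from closedness of the sublevel set and $z(\tau) = \lim_{t \to \tau^+} z(t) \geq 0$ from $z > 0$ on $(\tau, t_1]$. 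Notably, this approach never uses uniqueness of solutions, so it remains valid even though $\alpha$ is not assumed Lipschitz.
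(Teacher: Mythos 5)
Your proof is correct, and it rests on the same core observation as the paper's: by strict monotonicity of $\alpha$ and $\alpha(0)=0$, the derivative $\dot z = \alpha(-z)$ is negative wherever $z>0$, so the trajectory cannot escape upward through zero. The execution differs, though, and yours is the more careful of the two. The paper argues backward from the offending time $t$ with $z(t)>0$, asserting from $\dot z(t)<0$ that ``$z(\tau)>z(t)$ for all $\tau<t$'' and hence $z(t_0)>0$; as written this leaps from a single-point derivative sign to a global monotonicity claim, and really relies on the unstated fact that $\dot z<0$ throughout the maximal interval on which $z$ stays positive. You instead pin down the last crossing time $\tau=\sup\{t\in[t_0,t_1]:z(t)\le 0\}$, verify $z(\tau)=0$ by a two-sided continuity argument, and then invoke the fundamental theorem of calculus for absolutely continuous functions to write $z(t_1)=\int_\tau^{t_1}\alpha(-z(s))\,ds<0$, which contradicts $z(t_1)>0$ directly. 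This makes genuine use of the absolute-continuity hypothesis (which the paper's proof never visibly touches), correctly handles the fact that the differential relation holds only almost everywhere, and, as you note, avoids any appeal to uniqueness of solutions, so nothing breaks when $\alpha$ is non-Lipschitz --- which is exactly the regime the lemma is designed for.
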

\begin{proof}
Suppose otherwise; that is{,} suppose $z(t_0) \leq 0$ and that there exists $t > t_0$ such that $z(t) > 0$. This implies that $\dot{z}(t) = \alpha(-z(t)) < 0$. It follows that $z(\tau) > z(t)$ for all $\tau < t$, including $\tau = t_0$. This implies that $z(t_0) > z(t) > 0$, which contradicts the assumption that $z(t_0) \leq 0$. 
\end{proof}


\subsection{Control Barrier Functions}

We consider a system of the form
\begin{equation}
    \dot{x} = f(t,x) + g(t,x) u \label{eq:model}
\end{equation}
with time $t\in\mathcal{T} \triangleq [t_0,t_f]$, state $x \in \mathcal{X} \subseteq \reals^n$, and control input $u \in \reals^m$. Let $f:\mathcal{T}\times\mathcal{X}\rightarrow \reals^n$ and $g:\mathcal{T}\times\mathcal{X}\rightarrow\reals^{n\times m}$ be piecewise continuous in $t$ and locally Lipschitz continuous in $x$. Assume $x(t)$ exists for all $t\in\mathcal{T}$. Given a function $h:\mathcal{T}\times\mathcal{X}\rightarrow\reals$, called a \emph{constraint function}, we {seek to render trajectories always inside} the \emph{safe set} 
\begin{equation}
    \mathcal{S}(t) \triangleq \{ x \in \mathcal{X} \mid h(t,x) \leq 0 \}\,\regularversion{.}\extendedversion{,} \label{eq:safe_set}
\end{equation}
\extendedversion{{specifically by rendering a subset of $\mathcal{S}$} forward invariant.} 
To this end, we first generalize the \regularversion{notion}\extendedversion{definition} of CBF \cite{Automatica,CBF_Tutorial,CBFs_STL} to functions that are differentiable almost everywhere, to account for how our predictive CBFs will have discontinuities at the beginning and end of the prediction horizon.

\begin{definition}
\label{def:cbf}
An absolutely continuous function ${\varphi} : \mathcal{T} \times\mathcal{X} \rightarrow \reals$, denoted ${\varphi}(t,x)$, 
is a \emph{control barrier function (CBF)} 
if there exists $\alpha \in \mathcal{K}$ such that
\begin{equation}
    \inf_{u\in\reals^m} \big[ \underbrace{\partial_t {\hspace{1pt}\varphi}( \cdot ) + L_{f(\cdot) + g(\cdot) u} {\hspace{1pt}\varphi}( \cdot )}_{=\frac{d}{dt}[{\varphi}(t,x)]} \big] \leq \alpha(-{\varphi}(\cdot)) \,, \label{eq:cbf_definition}
\end{equation}
where $(\cdot) = (t,x)$, for almost every $x \in \mathcal{S}_\varphi(t), t \in \mathcal{T}${,
where}
    ${\mathcal{S}_\varphi(t)\triangleq\{x\in\mathcal{X}\mid{\varphi}(t,x)\leq 0 \} \,.}$ 
\end{definition}

\extendedversion{CBFs have gained popularity because they provide for a convenient method to provably guarantee forward invariance of sets of the form \eqref{eq:safe_set}.}

\begin{lemma}
\label{prior:cbf_invariance}
Suppose ${\varphi}:\mathcal{T}\times\mathcal{X}\rightarrow \reals$ is a CBF and let $\alpha\in\mathcal{K}$. 
Then any control law $u(t,x)$ that is piecewise continuous in $t$ and locally Lipschitz continuous in $x$, and that satisfies $u(t,x) \hspace{-0.4pt}\in\hspace{-0.4pt} K_\textnormal{\textrm{cbf}}(t,x)$ for almost every $x\hspace{-0.4pt}\in\hspace{-0.4pt}\mathcal{S}_\varphi(t), t\hspace{-0.4pt}\in\hspace{-0.4pt}\mathcal{T}$, where 
\begin{equation}
    \hspace{-6pt} K_\textnormal{\textrm{cbf}}(\cdot) \hspace{-1.5pt}\triangleq\hspace{-1.5pt} \{ u \hspace{-1.5pt}\in\hspace{-1.5pt} \reals^m \hspace{-1.5pt}\mid\hspace{-1.5pt} \partial_t {\varphi}(\cdot) \hspace{-1.5pt}+\hspace{-1.5pt} L_{f(\cdot) + g(\cdot) u} {\varphi}(\cdot) \hspace{-1.5pt}\leq\hspace{-1.5pt} \alpha(-{\varphi}(\cdot)) \},\hspace{-4pt} \label{eq:cbf_condition}
\end{equation}
will render the set 
{$\mathcal{S}_\varphi$ in Definition~\ref{def:cbf}}
forward invariant.
\end{lemma}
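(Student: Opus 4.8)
The plan is to reduce the claim to the scalar comparison result already established in Lemma~\ref{lemma:class_k} by evaluating ${\varphi}$ along the closed-loop trajectory. First I would fix an initial condition with $x(t_0) \in \mathcal{S}_\varphi(t_0)$ and note that, since $f$, $g$, and $u$ are piecewise continuous in $t$ and locally Lipschitz in $x$, the closed-loop vector field $f(t,x) + g(t,x)u(t,x)$ is Carath\'eodory and locally Lipschitz in $x$, so the solution $x(\cdot)$ (assumed to exist on $\mathcal{T}$) is unique and absolutely continuous. I would then define the scalar signal $z(t) \triangleq {\varphi}(t, x(t))$. The first technical step is to verify that $z$ is absolutely continuous and that the chain rule $\dot z(t) = \partial_t {\varphi}(t,x(t)) + L_{f+gu}{\varphi}(t,x(t))$ holds for almost every $t$; this is exactly where the ``differentiable almost everywhere'' hypothesis on ${\varphi}$ is used, and it requires that the composition of the absolutely continuous ${\varphi}$ with the absolutely continuous trajectory inherit absolute continuity (e.g. via local Lipschitzness of ${\varphi}$ in $x$).

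Second, I would invoke the hypothesis $u(t,x) \in K_\textnormal{\textrm{cbf}}(t,x)$. Combining the definition of $K_\textnormal{\textrm{cbf}}$ in \eqref{eq:cbf_condition} with the chain rule from the previous step yields the differential inequality $\dot z(t) \le \alpha(-z(t))$ for almost every $t$ at which $z(t) \le 0$, equivalently $x(t) \in \mathcal{S}_\varphi(t)$.

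Third, I would upgrade this differential inequality into the invariance conclusion. The cleanest route is a contradiction argument patterned on the proof of Lemma~\ref{lemma:class_k}: assuming $z(t_2) > 0$ for some $t_2 > t_0$, continuity of $z$ together with $z(t_0) \le 0$ produce a last crossing time $t_1 \triangleq \sup\{\tau \le t_2 : z(\tau) \le 0\}$ with $z(t_1) = 0$ and $z(\tau) > 0$ for $\tau \in (t_1, t_2]$. At the crossing one has $x(t_1)\in\mathcal{S}_\varphi(t_1)$, so the inequality gives $\dot z(t_1) \le \alpha(0) = 0$, and uniqueness of solutions (from local Lipschitzness) is used to rule out the trajectory leaving $\mathcal{S}_\varphi$ through $t_1$. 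Equivalently, one may compare $z$ directly against the solution $y$ of $\dot y = \alpha(-y)$ with $y(t_0) = z(t_0) \le 0$, for which Lemma~\ref{lemma:class_k} gives $y(t)\le 0$, and then argue $z(t) \le y(t)$.

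I expect this third step to be the main obstacle. The subtlety is that $\dot z \le \alpha(-z)$ is guaranteed only on $\mathcal{S}_\varphi$ (where $z \le 0$) and only almost everywhere, so a textbook comparison lemma cannot be applied verbatim on an interval where $z$ has already become positive. The crossing-time argument must therefore lean on (i) the continuity of $z$, so that $\mathcal{S}_\varphi$ can only be exited through the boundary $\{z = 0\}$, and (ii) the local Lipschitzness of the closed-loop vector field, which furnishes uniqueness and blocks the degenerate ``escape with vanishing boundary derivative'' that a merely continuous field would permit. A secondary care point is confirming that the measure-zero set on which ${\varphi}$ fails to be differentiable (the horizon endpoints) consists of finitely many time instants, so that it does not obstruct evaluating the inequality at almost every $t$ along the one-dimensional trajectory.
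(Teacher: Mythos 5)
Your second route in the third step is exactly the paper's proof: the paper defines $\theta(t)=\varphi(t,x(t))$, uses uniqueness of closed-loop solutions and the membership $u\in K_\textnormal{\textrm{cbf}}$ to get $\dot\theta(t)\le\alpha(-\theta(t))$ almost everywhere, invokes a comparison lemma valid for non-Lipschitz right-hand sides (cited externally) to bound $\theta$ by the \emph{maximal} solution $\bar z$ of $\dot z=\alpha(-z)$ with $\bar z(t_0)=\theta(t_0)\le 0$, and then applies Lemma~\ref{lemma:class_k} to conclude $\bar z\le 0$ and hence $\theta\le 0$. So if you commit to that variant, you are on the paper's path. Your \emph{preferred} variant, however, has a genuine gap: at the last crossing time $t_1$ the bound $\dot z(t_1)\le\alpha(0)=0$ is (i) not guaranteed, because the differential inequality holds only for almost every $t$ and may fail precisely at the single instant $t_1$, and (ii) not sufficient even if it holds, since a nonpositive derivative at one point does not preclude $z$ becoming positive immediately afterwards (e.g.\ $z(t)=(t-t_1)^2$); ``uniqueness of the closed-loop solution'' does not repair this, because what is needed is a comparison for the scalar signal $z$ against the non-Lipschitz ODE $\dot z=\alpha(-z)$, not uniqueness of $x(\cdot)$. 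That is precisely why the paper outsources this step to a maximal-solution comparison lemma rather than arguing pointwise. Your observation that the inequality is only guaranteed while $x(t)\in\mathcal{S}_\varphi(t)$ is a fair concern, but note the paper's own proof does not treat it either (it asserts the inequality a.e.\ on all of $\mathcal{T}$); a fully rigorous treatment would add a continuity/maximal-interval argument, which neither you nor the paper spells out.
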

\begin{proof}
The continuity assumptions on $f$, $g$, and $u$ imply that solutions $x(t),t\in\mathcal{T}$ to \eqref{eq:model} are unique. Define ${\theta}(t) = {\varphi}(t,x(t))$. Forward invariance is concerned with states $x(t_0)\in\mathcal{S}_{\varphi}(t_0)$, so ${\theta}(t_0) = {\varphi}(t_0,x(t_0)) \leq 0$. The choice $u(t,x) \in K_\textrm{cbf}(t,x)$ ensures that $\dot{{\theta}}(t) \leq \alpha(-{\theta}(t))$ for almost every $t\in\mathcal{T}$. Thus, by the comparison lemma \cite[Lemma IX.2.6]{nonlipschitz}, ${\theta}(t) \leq \bar{z}(t)$, where $\bar{z}(t)$ is the maximum solution of $\dot{z}(t) = \alpha(-z(t))$ with $z(t_0) = {\theta}(t_0)$. By Lemma~\ref{lemma:class_k}, every solution of $\dot{z}(t) = \alpha(-z(t))$ starting from $z(t_0) = {\theta}(t_0) \leq 0$ satisfies $z(t) \leq 0$ for all $t \in \mathcal{T}$. Therefore, $\bar{z}(t)\leq 0$ for all $t\in\mathcal{T}$, so ${\varphi}(t,x(t)) = {\theta}(t) \leq \bar{z}(t) \leq 0$ and thus $x(t) \in \mathcal{S}_{\varphi}(t)$ for all $t \in \mathcal{T}$, which is the definition of forward invariance of $\mathcal{S}_{\varphi}$.
\end{proof}

\noindent
Note that Lemma~\ref{lemma:class_k} and Lemma~\ref{prior:cbf_invariance} generalize \cite[Lemma 1]{CBFs_STL} and \cite[Thm. 1]{CBFs_STL}, respectively, to allow for non-Lipschitz $\alpha$, which we will need to prove our main theorem in Section~\ref{sec:method}.

We are now ready to introduce the main topic of this paper.

\section{Predictive Control Barrier Functions} \label{sec:method}

The PCBF has three components: a nominal trajectory (Section~\ref{sec:method_a}), points of interest on this trajectory (Section~\ref{sec:method_b}), and the {function structure} (Section~\ref{sec:method_c}). After introducing all three parts, we then analyze the the PCBF zero-sublevel set and present a computational lemma required for implementation. Our main theoretical result then shows that the PCBF satisfies Definition~\ref{def:cbf}. We conclude the section with a discussion of the main theorem assumptions.


\subsection{Path Functions} \label{sec:method_a}

Suppose that an agent seeks to follow a nominal trajectory through $\mathcal{X}$, which is not necessarily safe. We call this {tra-jectory} a \emph{path}, and assume that all potential paths of {a} system can be described by a \emph{path function}, defined as follows.

\begin{definition}
A function $p:\mathcal{T}\times\mathcal{T}\times\mathcal{X}\rightarrow\mathcal{X}$, denoted $p(\tau; t,x)$, is called a \emph{path function} if 1) it is continuously differentiable in all inputs, 2) $p(t; t,x) = x, \forall (t,x)\in\mathcal{T}\times\mathcal{X}$, and 3) for every $(t,x)\in\mathcal{T}\times\mathcal{X}$, there exists a {control law} $\mu:\mathcal{T}\times\mathcal{X}\rightarrow\reals^m$, denoted $\mu(t,x)$, piecewise continuous in $t$ and locally Lipschitz continuous in $x$, that satisfies
\begin{equation}
    \frac{\partial}{\partial \tau}p(\tau; \cdot) = f(\tau, p(\tau; \cdot)) + g(\tau, p(\tau; \cdot)) \mu(\tau, p(\tau; \cdot)) \label{eq:def_path}
\end{equation}
for all $\tau \geq t$, where $(\cdot)=(t,x)$.
\end{definition}

That is, $p$ is a function that from {every $(t,x)$} generates a trajectory $p(\tau; t,x)$ over future times $\tau$ starting from state $x$, where $p$ must be feasible according to the dynamics \eqref{eq:model}. We note that $\tau$ is defined as absolute time in this paper, but $\tau$ could be equivalently defined as time since $t$ (e.g. as in $\beta$ in \cite[Sec. 3.3]{Automatica}). Since generated paths $p(\tau; \cdot)$ are not required to be safe, 
{this paper decouples safety from the computation of $p$, and thus enables the use of very simple nominal paths.} 

\begin{remark}
Path functions can be defined either geometrically (as in Section~\ref{sec:satellite}), provided a feasible $\mu(t,x)$ in \eqref{eq:def_path} exists, or as the explicit (as in Section~\ref{sec:vehicles}) or numerical (as in \cite[Eq. 26]{Automatica}{,\cite[Eq. 23]{backup_controller}}) solution to the differential equation \eqref{eq:def_path} given a nominal input $\mu(t,x)$.
\end{remark}

\subsection{Future Times of Interest on the Propagated Trajectory} \label{sec:method_b}

\begin{figure}
    \raggedleft
    \hspace{-20pt}\begin{tikzpicture}
        \node[anchor=south east,inner sep=0] (image) at (-0.1,2.67) {\includegraphics[width=0.936\columnwidth]{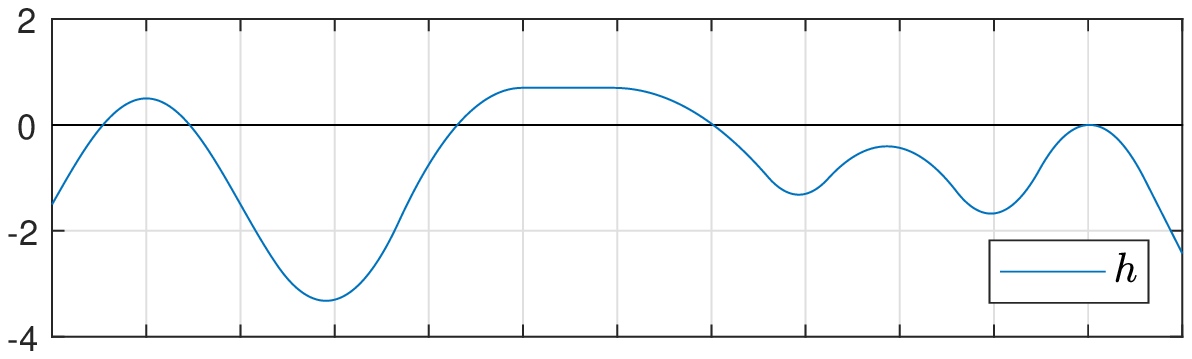}};
        \node[anchor=south east,inner sep=0] (image) at (-0.1,2.1) {\includegraphics[width=0.902\columnwidth]{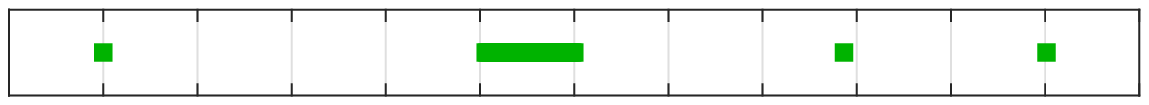}};
        \node[anchor=south east,inner sep=0] (image) at (-0.1,1.44) {\includegraphics[width=0.902\columnwidth]{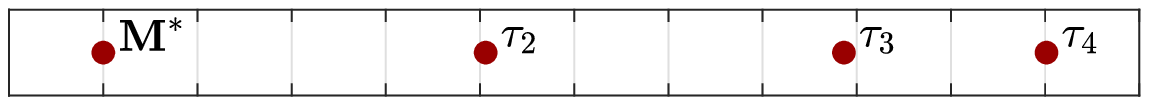}};
        \node[anchor=south east,inner sep=0] (image) at (0,0) {\includegraphics[width=0.92\columnwidth]{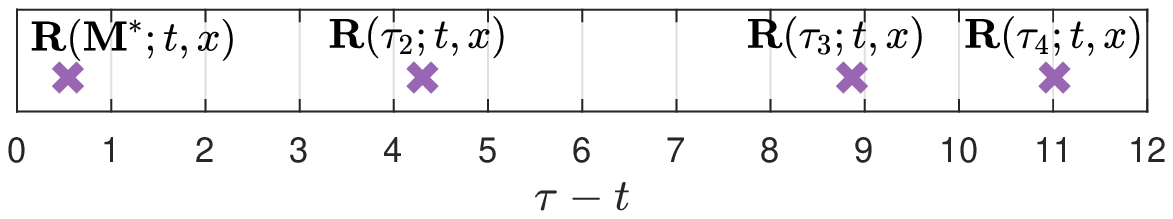}};
        \node [anchor=south east, rotate=90] (note) at (-8.2,5.08) {$h(\tau,p(\tau;t,x))$}; 
        \node [anchor=south east, rotate=90] (note) at (-8.04,2.76) {$\mathscr{M}$}; 
        \node [anchor=south east, rotate=90] (note) at (-8.0,2.12) {$\boldsymbol{M}$}; 
        \node [anchor=south east, rotate=90] (note) at (-7.97,1.5) {$\boldsymbol{R}(\cdot)$}; 
    \end{tikzpicture}
    \caption{A sample trajectory of $h(\tau,p(\tau;t,x))$ with the corresponding sets $\mathscr{M}$ and $\boldsymbol{M}$. The value of $\boldsymbol{R}$ corresponding to each element of $\boldsymbol{M}$ is also shown. The set of all local maximizers $\mathscr{M}$ is dense, so $\boldsymbol{M}$ extracts one element from every isolated subset of $\mathscr{M}$. Each element of $\boldsymbol{M}$ and its corresponding root $\boldsymbol{R}$ is then used to construct \eqref{eq:def_H}-\eqref{eq:def_Hstar}. 
    }
    \label{fig:sets}
    \vspace{-12pt}
\end{figure}

Given a path function, we can examine the safety of the agent forward in time. To do this, we assume {that} the agent {knows both the current and future construction of its safe set}
(e.g. future locations of obstacles) { on a finite receding horizon $T > 0$}, so that {the agent} is able to compute $h$ {across this horizon}. 
%
{Given this, we then reduce the continuous trajectory $p(\tau;\cdot)$ to a finite number of ``times of interest'', namely the maximizers and roots of $h(\tau, p(\tau; \cdot))$. These times are shown for a sample trajectory in Fig.~\ref{fig:sets}, and defined mathematically as follows. First,} define the set of all maximizers within the horizon, ${\mathscr{M}}:\mathcal{T}\times\mathcal{X}\rightarrow \mathbb{P}'(\reals)$, as
\begin{multline}
    \hspace{-10pt} {\mathscr{M}}(t,x) \hspace{-1.5pt}\triangleq\hspace{-1.5pt} \{ \tau \in [t, t+T] \mid \exists \epsilon \hspace{-1.5pt}>\hspace{-1.5pt} 0 : \forall \sigma \hspace{-1.5pt}\in\hspace{-1.5pt} [\tau-\epsilon,\tau+\epsilon] \cap [t, t+T], \\ h(\tau, p(\tau; t,x)) \geq h(\sigma, p(\sigma; t,x)) \} \,. \label{eq:def_M}
\end{multline}
{Note that the set $\mathscr{M}$ could be uncountable, so next define the countable subset} $\boldsymbol{M}:\mathcal{T}\times\mathcal{X}\rightarrow\mathbb{P}(\reals)$ as
\begin{multline}
    \boldsymbol{M}(t,x) \triangleq \{ \tau \in {\mathscr{M}}(t,x) \mid \tau = t \textrm{ or } \\ \exists \epsilon > 0 : \sigma \notin {\mathscr{M}}(t,x), \forall \sigma \in (\tau - \epsilon, \tau) \}, \label{eq:def_Mstar}
\end{multline}
That is, $\boldsymbol{M}$ contains all the isolated elements of ${\mathscr{M}}$ and the first element of every interval in ${\mathscr{M}}${, as shown in Fig.~\ref{fig:sets}}. This extra step is needed to ensure that $\boldsymbol{M}$ has finitely many elements, 
so that $\boldsymbol{M} \in \mathbb{P}(\reals)$. Additionally, let $\boldsymbol{M}^*:\mathcal{T}\times\mathcal{X}\rightarrow\reals$, 
\begin{equation}
     \boldsymbol{M}^*(t,x) \triangleq \min \boldsymbol{M}(t,x) \,, \label{eq:def_M1star}
\end{equation}
denote the first element of $\boldsymbol{M}$, since we are most interested in the soonest future time where the trajectory could leave the safe set.
Next, it is not enough to know just the maximizer times; we also need to know when the predicted paths first become unsafe (if ever), {i.e. the roots of $h(\tau,p(\tau;\cdot))$ over $\tau\in[t,t+T]$,} so define $\boldsymbol{R}:\mathcal{T}\times\mathcal{T}\times\mathcal{X}\rightarrow \reals$ as
\begin{equation}
    \boldsymbol{R}(\tau;t,x) \triangleq \begin{cases} \boldsymbol{R}_1(\tau;t,x) & h(\tau,p(\tau;t,x)) > 0 \\ \tau & h(\tau,p(\tau;t,x)) \leq 0 \end{cases}, \label{eq:def_R}
\end{equation}
\vspace{-10pt}
\begin{multline}
    \hspace{-6pt}\boldsymbol{R}_1(\tau; t,x) \triangleq \max \Big\{ \eta \in [t, \tau] \mid h(\eta, p(\eta; t,x)) = 0, \\ 
    \hspace{-2pt} \exists \epsilon > 0 : h(\sigma, p(\sigma; t,x)) < 0, \forall \sigma \in (\eta-\epsilon, \eta) 
    \Big\}  . \hspace{-8pt} \label{eq:def_R1}
\end{multline}
That is, {for every propagated time $\tau\in[t,t+T]$}, {if $p(\tau;\cdot)$} is unsafe, then $\boldsymbol{R}(\tau; \cdot)$ is the root $\boldsymbol{R}_1=\eta$ of $h(\eta,p(\eta; \cdot))$ immediately preceding $\tau$, or if $p(\tau;\cdot)$ is safe, then 
{let}
$\boldsymbol{R}(\tau; \cdot) = \tau$.
{{The second case of \eqref{eq:def_R} is defined as above because we will be principally interested in $\boldsymbol{R}(\tau;\cdot)$ when $\tau$ is a local maximizer of $h(\tau,p(\tau;\cdot))$. If $\tau \in \boldsymbol{M}(\cdot)$, then it follows that at points where \eqref{eq:def_R} switches cases, i.e. where $h(\tau,p(\tau;\cdot))=0$ (e.g. $\tau_4$ in Fig.~\ref{fig:sets}), both cases of \eqref{eq:def_R} are equivalent. }}%
{Possible trajectories of $h(\tau,p(\tau;\cdot))$ and associated $\boldsymbol{M}$ and $\boldsymbol{R}$ values are shown in Fig.~\ref{fig:explanation}.}
{Together, the local maximums $h(\tau,p(\tau;\cdot)), \tau\in\boldsymbol{M}(\cdot)$ measure how unsafe a predicted trajectory is, and the roots $\boldsymbol{R}(\tau;\cdot)$ measure how much time the controller has to make a correction.}
\begin{figure}
    \centering
    \includegraphics[width=\columnwidth]{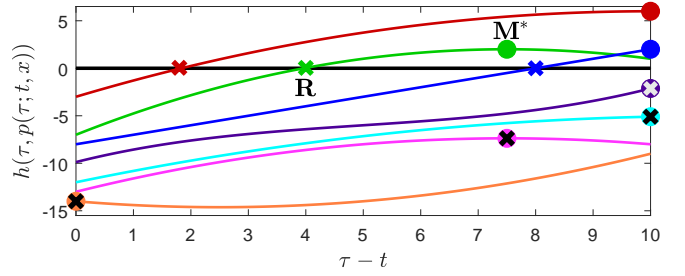}
    \caption{Various possible trajectories of $h(\tau,p(\tau;t,x))$ along a path $p$ on a finite horizon {$T=10$}. Each trajectory shown has only a single maximizer on $[t,t+T]$, so $\boldsymbol{M}^* = \boldsymbol{M}{=\mathscr{M}}$. Each maximizer $\boldsymbol{M}^*$ is annotated with a circle and each root $\boldsymbol{R}$ is annotated with an ``x''. When the maximum value of $h$ is nonpositive (bottom four lines), we let $\boldsymbol{R}(\boldsymbol{M}^*;\cdot) = \boldsymbol{M}^*$.}
    \label{fig:explanation}
    \vspace{-12pt}
\end{figure}

\subsection{Construction of the Predictive Control Barrier Function} \label{sec:method_c}

Next, define {the predictive safety function} $h_p:\mathcal{T}\times\mathcal{T}\times\mathcal{X}\rightarrow\reals$ as
\begin{equation}
    h_p(\tau,t,x) \triangleq h(\tau,p(\tau;t,x)) - m(\boldsymbol{R}(\tau;t,x) - t) \label{eq:def_hp}
\end{equation}
where $m:\reals_{\geq 0}\rightarrow\reals_{\geq 0}$ is any continuously differentiable function that is nondecreasing and satisfies $m(0) = 0$.
We will also need the following assumption.
\begin{assumption}
\label{as:h_max}
Assume that $h(t,x)$ is upper bounded by $h_\textrm{max}<\infty$ for all $t\in\mathcal{T},x\in\mathcal{X}$, and that 
$m(T) \geq h_\textrm{max}$. 
\end{assumption}
The function $h_p$ in \eqref{eq:def_hp} encodes both the future safety {$h$} of the system along $p$, and the time {$\boldsymbol{R}$} at which the path $p$ becomes unsafe. Here, $m$ is a margin to allow for $h_p$ to be nonpositive at $\tau > t$ even if the predicted path is unsafe, i.e. if $h(\tau,p(\tau;t,x)) > 0$. Assumption~\ref{as:h_max} implies that {even if a new maximizer $\tau=t+T$} is discovered at the end of the horizon, $h_p(\tau,t,x)$ will initially be nonpositive. \extendedversion{Our strategy is then to render $h_p(\tau,t,x)$ always nonpositive, which we will show also renders trajectories always inside $\mathcal{S}$ in \eqref{eq:safe_set}.}

\begin{remark}
In our formulation, we choose the argument of $m$ in \eqref{eq:def_hp} to be the time until the {nominal trajectory becomes unsafe \extendedversion{(or the time until $\tau$ if $p(\eta;t,x)$ is safe for all $\eta\in[t,\tau]$)}}. Alternatively, \cite[Eq. 38]{mitchells_paper} chooses the argument of $m$ to be the current safety distance $h(t,x(t))$. We believe the time-based formulation in {\eqref{eq:def_hp}} is more appropriate than \cite[Eq. 38]{mitchells_paper} in cases where $h(\tau,p(\tau;\cdot))$ is expected to vary rapidly with $\tau$ (e.g. as occurs for the system in Section~\ref{sec:satellite}).
\end{remark}

We then apply \eqref{eq:def_hp} at the {local maximizer times} in $\boldsymbol{M}$ in \eqref{eq:def_Mstar} to define functions $H:\mathcal{T}\times\mathcal{X}\rightarrow\mathbb{P}(\reals)$ and $H^*:\mathcal{T}\times\mathcal{X}\rightarrow\reals$ as
\vspace{-5pt}\begin{equation}
    H(t,x) \triangleq \begin{bmatrix} \vdots \\ h_p(\tau_i, t, x)\\ \vdots \end{bmatrix}, \forall \tau_i\in\boldsymbol{M}(t,x), \label{eq:def_H}
\end{equation}\vspace{-5pt}
\begin{equation}
    H^*(t,x) \triangleq h_p(\boldsymbol{M}^*(t,x), t, x) \,. \label{eq:def_Hstar}
\end{equation} \vspace{-14pt} \\
We now present theorems relating the future safety encoded in $H$ and $H^*$ to the present safety given by $h$ in \eqref{eq:safe_set}.


\begin{theorem}
\label{thm:subset}
For some constraint function $h:\mathcal{T}\times\mathcal{X}\rightarrow\reals$, let $\mathcal{S}$ be as in \eqref{eq:safe_set} and $H^*$ as in \eqref{eq:def_Hstar}, with $\mathcal{S}_H^*$ defined as
\begin{equation}
    \mathcal{S}_H^*(t) \triangleq \{ x \in \mathcal{X} \mid H^*(t,x) \leq 0\} \,.
\end{equation}
Then $\mathcal{S}_H^*(t)\subseteq\mathcal{S}(t), \forall t \in \mathcal{T}$.
\end{theorem}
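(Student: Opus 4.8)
The plan is to prove the contrapositive: fix $t \in \mathcal{T}$ and $x \in \mathcal{X}$, assume $x \notin \mathcal{S}(t)$, i.e. $h(t,x) > 0$, and show $H^*(t,x) > 0$ so that $x \notin \mathcal{S}_H^*(t)$. Since $p(t; t,x) = x$ by property 2 of a path function, the assumption reads $h(t, p(t; t,x)) > 0$; that is, the propagated trajectory is already unsafe at the initial time $\tau = t$. Write $\tau^* \triangleq \boldsymbol{M}^*(t,x)$ for the first local maximizer (well defined since $h(\cdot, p(\cdot;t,x))$ is continuous on the compact set $[t,t+T]$, so $\mathscr{M}$ and hence $\boldsymbol{M}$ are nonempty).

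First I would argue that $h(\tau^*, p(\tau^*; t,x)) \geq h(t,x) > 0$. The key structural fact, which I would extract directly from the extraction rule defining $\boldsymbol{M}$ from $\mathscr{M}$, is that $\mathscr{M}(t,x) \cap [t, \tau^*) = \emptyset$: the leftmost point of the connected component of $\mathscr{M}$ containing any hypothetical earlier maximizer would itself lie in $\boldsymbol{M}$ and be strictly less than $\min\boldsymbol{M} = \tau^*$, a contradiction. Using this, I would show $\tau \mapsto h(\tau, p(\tau;\cdot))$ is nondecreasing on $[t,\tau^*]$: if it decreased across some $a < b$ in this interval, the maximum of $h$ over $[t,b]$ would be attained at an interior point (the left endpoint $t$ is ruled out either because $\tau^* = t$ makes the claim trivial, or because $t \notin \mathscr{M}$ forces $h$ to exceed $h(t,x)$ immediately to the right of $t$), and that interior maximizer would be a local maximizer in $[t,t+T]$ lying strictly before $\tau^*$ — contradicting $\mathscr{M}(t,x)\cap[t,\tau^*)=\emptyset$. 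Monotonicity on $[t,\tau^*]$ then yields both $h(\tau^*, p(\tau^*;\cdot)) \geq h(t,x)$ and, crucially, $h(\tau, p(\tau;\cdot)) > 0$ for all $\tau \in [t,\tau^*]$.

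Next I would evaluate the margin term. Because $h(\tau^*, p(\tau^*;\cdot)) > 0$, definition \eqref{eq:def_R} selects the $\boldsymbol{R}_1$ branch, which returns the most recent $\eta \in [t,\tau^*]$ at which $h$ crosses zero from below. Having just shown $h > 0$ on all of $[t,\tau^*]$, there is no such crossing, so (with the convention that an empty defining set yields $\boldsymbol{R}(\tau^*;t,x) = t$) we get $m(\boldsymbol{R}(\tau^*;t,x) - t) = m(0) = 0$. Substituting into \eqref{eq:def_hp}–\eqref{eq:def_Hstar} gives $H^*(t,x) = h(\tau^*, p(\tau^*;\cdot)) - 0 \geq h(t,x) > 0$, completing the contrapositive and therefore the inclusion $\mathcal{S}_H^*(t) \subseteq \mathcal{S}(t)$.

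The main obstacle is the monotonicity/no-preceding-root step: it must correctly separate the left-endpoint case $t \in \mathscr{M}$ (where $\tau^* = t$ and everything is immediate) from $t \notin \mathscr{M}$, and it must lean carefully on the precise rule defining $\boldsymbol{M}$ so that ``no local maximizer precedes $\tau^*$'' is genuinely justified rather than assumed. The convention for $\boldsymbol{R}_1$ when its defining set is empty, namely $\boldsymbol{R} = t$, should be stated explicitly, since it is exactly what makes the margin term vanish and forces $H^*$ to inherit the strict positivity of $h(t,x)$.
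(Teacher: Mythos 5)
Your proof is correct and is essentially the paper's own argument run in the contrapositive direction: both hinge on the same two facts, namely that no local maximizer of $h(\tau,p(\tau;t,x))$ precedes $\boldsymbol{M}^*(t,x)$ (so $h(t,x)\leq h(\tau,p(\tau;t,x))$ on $[t,\boldsymbol{M}^*(t,x)]$) and that the margin term $m(\boldsymbol{R}-t)$ vanishes precisely when no root precedes the maximizer. Your explicit convention $\boldsymbol{R}_1=t$ when the set in \eqref{eq:def_R1} is empty is a sensible way to close an edge case that the paper instead handles implicitly by arguing only over $x\in\mathcal{S}_H^*(t)$, where membership forces $\boldsymbol{R}(\boldsymbol{M}^*(t,x);t,x)>t$ and hence a genuine preceding root; your treatment of the monotonicity step is also somewhat more careful than the paper's one-line assertion, but the underlying reasoning is the same.
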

\begin{proof}
By construction, $\boldsymbol{M}$ in \eqref{eq:def_Mstar} is always nonempty, because if no local maxima occur for $\tau \in [t,t+T)$, then $\tau = t+T$ is guaranteed to be an element in \eqref{eq:def_M}. Thus, $\boldsymbol{M}^*$ always exists and $H^*$ is well defined.

First, consider if $\boldsymbol{M}^*(t,x) = t$ (i.e. $h$ is initially decreasing along the path $p$). Then $H^*(t,x) = h(t,x)$ and the result follows immediately. 

Second, consider if $\boldsymbol{M}^*(t,x) \neq t$. This implies that $h$ is initially increasing along the path (e.g., as occurs in the top six lines of Fig.~\ref{fig:explanation}), so it must be that $h(t,x)\leq h(\tau,p(\tau;t,x))$ for all times $\tau \in [t,\boldsymbol{M}^*(t,x)]$, so all we need to do is show that there exists such a time $\tau$ where $h(\tau,p(\tau;t,x)) \leq 0$. If $h(\boldsymbol{M}^*(t,x),p(\boldsymbol{M}^*(t,x);t,x)) \leq 0$ (bottom four lines of Fig.~\ref{fig:explanation}), then the proof is done. If instead $h(\boldsymbol{M}^*(t,x),p(\boldsymbol{M}^*(t,x);t,x)) > 0$ (top three lines of Fig.~\ref{fig:explanation}), then by \eqref{eq:def_hp}, $x\in \mathcal{S}_H^*(t)$ implies that $m(\boldsymbol{R}(\boldsymbol{M}^*(t,x);t,x)-t) > 0$. Since $m(0) = 0$ and $m$  is nondecreasing, it follows that $\boldsymbol{R}(\boldsymbol{M}^*(t,x);t,x) > t$. Thus, there exists $\tau = \boldsymbol{R}(\boldsymbol{M}^*(t,x);t,x) \in (t,\boldsymbol{M}^*(t,x))$ such that $ h(\tau, p(\tau; t,x)) = 0$, so it must be that $h(t,x) \leq h(\tau, p(\tau; t,x)) \leq 0$. Thus, $x \in \mathcal{S}_H^*(t)$ implies $x \in \mathcal{S}(t)$. 
\end{proof}

\begin{corollary}
Let $q(t,x) = | \boldsymbol{M}(t,x) |$ and let ${H}_i$ be the $i$th output of $H$ in \eqref{eq:def_H}. {Define the} set $\mathcal{S}_H$ as
\begin{equation}
    \hspace{-3pt} \mathcal{S}_H(t) \triangleq \{ x \in \mathcal{X} \mid {H}_i(t,x) \leq 0, \forall i = 1, \cdots, q(t,x)\} {\,.} \hspace{-2pt}
\end{equation}
{Then} $\mathcal{S}_H(t) \subseteq \mathcal{S}(t), \forall t \in \mathcal{T}$.
\end{corollary}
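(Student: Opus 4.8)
The plan is to reduce the corollary to Theorem~\ref{thm:subset} by exhibiting the chain of inclusions $\mathcal{S}_H(t) \subseteq \mathcal{S}_H^*(t) \subseteq \mathcal{S}(t)$, of which the second inclusion is exactly the content of Theorem~\ref{thm:subset}. The only work, therefore, is to establish the first inclusion $\mathcal{S}_H(t) \subseteq \mathcal{S}_H^*(t)$.

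The key observation is that $H^*$ is, by construction, one of the components of the vector $H$. Indeed, $\boldsymbol{M}^*(t,x) = \min \boldsymbol{M}(t,x)$ in \eqref{eq:def_M1star} is itself an element of $\boldsymbol{M}(t,x)$ (recall from the proof of Theorem~\ref{thm:subset} that $\boldsymbol{M}(t,x)$ is always nonempty, so this minimum is attained), and $H^*(t,x) = h_p(\boldsymbol{M}^*(t,x),t,x)$ in \eqref{eq:def_Hstar} is precisely the entry $h_p(\tau_i,t,x)$ of $H(t,x)$ in \eqref{eq:def_H} associated with the index $\tau_i = \boldsymbol{M}^*(t,x)$. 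Hence there exists some $i\in\{1,\dots,q(t,x)\}$ with $H_i(t,x) = H^*(t,x)$.

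First I would fix $t\in\mathcal{T}$ and take any $x\in\mathcal{S}_H(t)$. By definition of $\mathcal{S}_H$, we have $H_i(t,x)\leq 0$ for \emph{all} $i=1,\dots,q(t,x)$; in particular this holds for the index identified above, giving $H^*(t,x) = H_i(t,x) \leq 0$, and therefore $x\in\mathcal{S}_H^*(t)$. This proves $\mathcal{S}_H(t)\subseteq\mathcal{S}_H^*(t)$: requiring every component of $H$ to be nonpositive is a stronger condition than requiring only the single component $H^*$ to be nonpositive.

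Finally I would invoke Theorem~\ref{thm:subset}, which gives $\mathcal{S}_H^*(t)\subseteq\mathcal{S}(t)$, and compose the two inclusions to conclude $\mathcal{S}_H(t)\subseteq\mathcal{S}(t)$ for all $t\in\mathcal{T}$. I do not anticipate a genuine obstacle here, since the result is essentially an immediate consequence of the theorem; the only point requiring care is the justification that $\boldsymbol{M}^*$ lies in $\boldsymbol{M}$ so that $H^*$ really is an entry of $H$, which follows from the nonemptiness of $\boldsymbol{M}$ already argued in the proof of Theorem~\ref{thm:subset}.
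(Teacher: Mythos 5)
Your argument is correct and is exactly the paper's proof, just written out in full: the paper's one-line justification ``$H_1 \equiv H^*$'' is precisely your observation that $\boldsymbol{M}^*=\min\boldsymbol{M}$ indexes a component of $H$, so $\mathcal{S}_H(t)\subseteq\mathcal{S}_H^*(t)$ and Theorem~\ref{thm:subset} finishes the job. No gaps.
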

\begin{proof}
The result follows immediately since  
${H}_1 \equiv H^*$. 
\end{proof}

That is, rendering either $\mathcal{S}_H$ or $\mathcal{S}_H^*$ forward invariant is sufficient to render trajectories always inside the safe set $\mathcal{S}$. The remaining question is under what conditions rendering $\mathcal{S}_H$ or $\mathcal{S}_H^*$ forward invariant will be possible. First, we make some regularity assumptions.

\begin{assumption}
\label{as:differentiable}
Assume that the functions $H$ and $H^*$ in \eqref{eq:def_H} and \eqref{eq:def_Hstar}, respectively, are absolutely continuous. Assume further that each maximizer location $\tau_i \in \boldsymbol{M}(t,x) \cap (t,t+T)$, where $\tau_i = \tau_i(t,x)$, is continuously differentiable.
\end{assumption}

That is, each maximizer $\tau_i$ in $\boldsymbol{M}(t,x)$ is also a function of $t$ and $x$, and we assume that the variation of $\tau_i(t,x)$ is regular when $\tau_i \in (t,t+T)$. When $\tau_i$ is at the start or end of the horizon, this will in general not be true, which is why we only assume $H$ and $H^*$ to be absolutely continuous rather than continuously differentiable. 

Next, we present a result on how to compute the derivatives of $H^*$ in \eqref{eq:def_Hstar} and each output ${H}_i$ in \eqref{eq:def_H}.

\begin{lemma}
\label{lemma:computing}
Let $\tau \in \boldsymbol{M}(t,x)$ 
and let $m':\reals_{\geq0}\rightarrow\reals_{\geq0}$ be the derivative of $m$ in \eqref{eq:def_hp}. Assume $x \in \mathcal{S}_H^*(t)$. 

\emph{Case i:} If $\tau \in (t,t+T)$, then
\vspace{-2pt}
\begin{multline}
\frac{d}{dt}[h_p(\tau, p(\tau; t, x))] = \\ m'(\boldsymbol{R}(\tau;t,x)-t) + B(\tau,t,x) (u - \mu(t,x)) \,. \label{eq:computational_theorem}
\end{multline}

\emph{Case ii:}
If $\tau = t+T$ and $\boldsymbol{R}(\tau;t,x) < \tau$, then 
\vspace{-2pt}
\begin{multline}
    \frac{d}{dt}[h_p(\tau;p(\tau;t,x))] = \frac{d}{d\tau}[h(\tau,p(\tau;t,x))] \frac{d \tau(t,x)}{dt} \\ + m'(\boldsymbol{R}(\tau;t,x)-t) + B(\tau,t,x)(u - \mu(t,x)) \,. \label{eq:computational_theorem2}
\end{multline}

\emph{Case iii:}
If $\tau \in \{t, t+T\}$ and $\boldsymbol{R}(\tau;t,x) = \tau$, then
\begin{align}
    \frac{d}{dt}[h_p(\tau;&p(\tau;t,x))] = \frac{d}{d\tau}[h(\tau,p(\tau;t,x))] \frac{d \tau(t,x)}{dt} \nonumber  \\ & + \frac{\partial h(\tau,p(\tau;t,x))}{\partial x}\frac{\partial p(\tau;t,x)}{\partial x} g(t,x)(u - \mu(t,x)) \nonumber \\ & -m'(\boldsymbol{R}(\tau;t,x) - t) \left( \frac{d \tau(t,x)}{dt} - 1\right) \,, \label{eq:computational_theorem3} \hspace{-4pt}
\end{align}
\vspace{-12pt}

\noindent
where
\begin{multline}
    B(\tau;t,x) = \bigg[ \frac{\partial h(\tau,p(\tau;t,x))}{\partial x}\frac{\partial p(\tau;t,x)}{\partial x} \\ - m'(\boldsymbol{R}(\tau;t,x) - t) C(\tau,t,x) \bigg] g(t,x) \,, \label{eq:def_B}
\end{multline}
\begin{equation}
    \hspace{-3pt} C(\tau,t,x) = \begin{cases} \frac{\partial \tau(t,x)}{\partial x} & \boldsymbol{R}(\tau;t,x) = \tau \\ C_1(\boldsymbol{R}_1(\tau;t,x),t,x) & \boldsymbol{R}(\tau;t,x) \neq \tau \end{cases} , \hspace{-2pt} \label{eq:def_C}
\end{equation}
\vspace{-6pt}
\begin{multline}
    \hspace{-10pt} C_1(\eta,t,x) = - \frac{\partial h(\eta,p(\eta;t,x))}{\partial x} \frac{\partial p(\eta;t,x)}{\partial x}\\ \cdot \left[ \frac{\partial h(\eta,p(\eta;t,x))}{\partial t}  + \frac{\partial h(\eta,p(\eta;t,x))}{\partial x}\frac{\partial p(\eta;t,x)}{\partial \tau} \right]^{-1} \hspace{-8pt}\hspace{-6pt} \label{eq:def_C1}
\end{multline}
and where $\frac{d\tau(t,x)}{dt} = \partial_t \tau(t,x) + L_{f(t,x) + g(t,x) u} \tau(t,x)$ describes the sensitivity of the maximizer time to the current time $t\in\mathcal{T}$ and state $x\in\mathcal{X}$.
\end{lemma}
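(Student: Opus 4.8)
The plan is to derive all three formulas from a single chain-rule expansion of the total derivative $\frac{d}{dt}[h_p(\tau,t,x)]$ along a closed-loop trajectory $\dot x = f(t,x)+g(t,x)u$, starting from \eqref{eq:def_hp}, and then to specialize using two structural facts: a flow-invariance identity for the path function, and the first-order optimality condition at a maximizer. Throughout, $\tau=\tau(t,x)$ is the maximizer location (differentiable in the interior by Assumption~\ref{as:differentiable}) and $\boldsymbol{R}=\boldsymbol{R}(\tau;t,x)$ its root from \eqref{eq:def_R}. Writing $\partial_1 h$ and $\partial_2 h$ for the partials of $h$ in its time and state slots, and $\partial_\tau p$, $\partial_t p$, $\partial_x p$ for the partials of $p$ in its three arguments, the chain rule gives $\frac{d}{dt}[h(\tau,p(\tau;t,x))] = \big(\partial_1 h + \partial_2 h\,\partial_\tau p\big)\frac{d\tau}{dt} + \partial_2 h\big(\partial_t p + \partial_x p\,\dot x\big)$, where the first bracket is precisely $\frac{d}{d\tau}[h(\tau,p(\tau;t,x))]$.

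The crux is the flow-invariance identity $\partial_t p(\tau;t,x) + \partial_x p(\tau;t,x)\,(f(t,x)+g(t,x)\mu(t,x)) = 0$, which I would prove by differentiating the semigroup property $p(\tau;t,x)=p(\tau;s,p(s;t,x))$ at $s=t$ and invoking \eqref{eq:def_path}. Substituting $\dot x = f+gu$ then collapses $\partial_t p + \partial_x p\,\dot x$ to $\partial_x p\,g(u-\mu)$, which is the source of the $(u-\mu)$ dependence and of the term $\partial_2 h\,\partial_x p\,g$ appearing in $B$ of \eqref{eq:def_B}. Applied to the maximizer time, the same identity shows $\tau(t,x)$ is invariant along the nominal flow, i.e. $\partial_t\tau+\partial_x\tau(f+g\mu)=0$, hence $\frac{d\tau}{dt}=\partial_x\tau\,g(u-\mu)$ whenever $\tau$ genuinely moves with the state (the interior case).

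For the margin term I would compute $\frac{d\boldsymbol{R}}{dt}$ in the two cases of \eqref{eq:def_R}: when $h>0$ so $\boldsymbol{R}=\boldsymbol{R}_1$ is a root, implicit differentiation of $h(\eta,p(\eta;t,x))=0$ together with the flow identity yields $\frac{d\boldsymbol{R}}{dt}=C_1\,g(u-\mu)$ with $C_1$ exactly \eqref{eq:def_C1}; when $h\le 0$ so $\boldsymbol{R}=\tau$, one has $\frac{d\boldsymbol{R}}{dt}=\frac{d\tau}{dt}$, giving $C=\partial_x\tau$. This recovers $C$ uniformly as in \eqref{eq:def_C}. The three cases then differ only in which ``boundary'' terms survive: in Case i ($\tau$ interior) the optimality condition $\frac{d}{d\tau}[h]=0$ annihilates the $\frac{d\tau}{dt}$ term, and flow-invariance of $\tau$ lets me fold $-m'\frac{d\boldsymbol{R}}{dt}$ into $B$, leaving the $+m'$ constant and $B(u-\mu)$ of \eqref{eq:computational_theorem}; in Case ii ($\tau=t+T$, unsafe) the maximizer is pinned at the horizon so $\frac{d}{d\tau}[h]$ need not vanish and $\boldsymbol{R}=\boldsymbol{R}_1$, producing the extra $\frac{d}{d\tau}[h]\frac{d\tau}{dt}$ term of \eqref{eq:computational_theorem2}; in Case iii ($\tau$ pinned, safe) both the $\frac{d}{d\tau}[h]\frac{d\tau}{dt}$ term and the un-folded $-m'(\frac{d\tau}{dt}-1)$ term remain, so the $(u-\mu)$ coefficient is only $\partial_2 h\,\partial_x p\,g$, matching \eqref{eq:computational_theorem3}.

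The main obstacle I anticipate is rigorously justifying the derivatives of the maximizer and root locations. The flow identity and interior optimality are routine once set up, but at the horizon endpoints the maximizer is constrained to $\tau=t+T$, so the clean flow-invariance of $\tau$ fails and the $\frac{d\tau}{dt}$ terms genuinely cannot be eliminated -- exactly why Assumption~\ref{as:differentiable} assumes differentiability only of interior maximizers and only absolute continuity of $H,H^*$. For the root I must ensure the inverse in \eqref{eq:def_C1} exists, which requires transversality $\frac{d}{d\tau}[h]\neq 0$ at $\eta=\boldsymbol{R}_1$; this holds because \eqref{eq:def_R1} selects an up-crossing out of the safe region, so the implicit function theorem applies. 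Checking that these regularity conditions hold and that the case switching in \eqref{eq:def_R}--\eqref{eq:def_C} stays consistent at the switch points is the most delicate bookkeeping.
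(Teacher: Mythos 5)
Your proposal is correct and follows essentially the same route as the paper's appendix proof: the paper likewise differentiates the semigroup identity $h(\tau,p(\tau;t,x)) = h(\tau,p(\tau;\sigma,p(\sigma;t,x)))$ (and its analogue for $\tau(t,x)$) at $\sigma=t$ to obtain the flow-invariance relation that collapses $\partial_t p + \partial_x p\,\dot x$ to $\partial_x p\,g(u-\mu)$, kills the $\frac{d}{d\tau}[h]$ term via interior first-order optimality in Case i, and obtains $C_1$ by implicit differentiation of $h(\boldsymbol{R}_1,p(\boldsymbol{R}_1;t,x))=0$, with the identical case bookkeeping at the horizon endpoints. Your explicit flagging of the transversality condition needed for the inverse in \eqref{eq:def_C1} is a point the paper leaves implicit, but it does not change the argument.
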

\begin{proof}
See appendix.
\end{proof}

We are now ready to prove that, under mild assumptions elaborated upon after the proof, $H^*$ in \eqref{eq:def_Hstar} is a CBF, and thus can be used to ensure safety via the CBF condition.

\begin{theorem}
\label{thm:is_a_cbf}
Let the derivative $m':\reals_{\geq0}\rightarrow\reals_{\geq0}$ of $m$ in \eqref{eq:def_hp} be strictly positive on $(0,T)$. Suppose that there exists $\gamma$ such that $\frac{d}{d\tau}[h(\tau,p(\tau;t,x))] \leq \gamma$ for all $\tau\in\mathcal{T},t\in\mathcal{T},x\in\mathcal{X}$. Suppose also that $\| \frac{\partial h(\eta, p(\eta; t,x))}{\partial x}\frac{\partial p(\eta; t,x)}{\partial x} g(t,x) \| \neq 0$ for all $\eta \in (t,t+T)\setminus {\mathscr{M}}(t,x)$ and for all $t\in\mathcal{T},x\in\mathcal{X}$, and that $\frac{\partial h(\tau, p(\tau;t,x))}{\partial x}^\textnormal{\textrm{T}} \frac{\partial h(\eta, p(\eta;t,x))}{\partial x} \geq 0$ whenever $\eta = \boldsymbol{R}(\tau;t,x)$ for all $\tau\in\mathcal{T},t\in\mathcal{T},x\in\mathcal{X}$. Then $H^*$ in \eqref{eq:def_Hstar} is a CBF.
\end{theorem}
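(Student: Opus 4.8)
The plan is to verify Definition~\ref{def:cbf} directly. For almost every $(t,x)$ with $H^*(t,x)\leq 0$, I must produce an $\alpha\in\mathcal{K}$ for which $\inf_{u}\frac{d}{dt}H^*(t,x)\leq\alpha(-H^*(t,x))$. Absolute continuity of $H^*$ (Assumption~\ref{as:differentiable}) makes $H^*$ differentiable at almost every $t$, and Lemma~\ref{lemma:computing} supplies $\frac{d}{dt}H^*$ in closed form. The decisive observation is that each of \eqref{eq:computational_theorem}--\eqref{eq:computational_theorem3} is \emph{affine} in $u$, so the proof reduces to a dichotomy: wherever the coefficient of $u$ is nonzero, the infimum over $u\in\reals^m$ equals $-\infty$ and the CBF inequality holds for any $\alpha$; wherever it vanishes, the uncontrolled remainder of $\frac{d}{dt}H^*$ must be shown to be dominated by $\alpha(-H^*)$. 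I would therefore classify $(t,x)$ by the three cases of Lemma~\ref{lemma:computing} and establish nonvanishing of the control coefficient exactly where it is needed.

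The core of the argument is Case~i, the interior maximizer $\tau=\boldsymbol{M}^*\in(t,t+T)$ with an unsafe predicted path ($\boldsymbol{R}(\tau;\cdot)\neq\tau$), for which the coefficient of $u$ is $B$ in \eqref{eq:def_B}. I would show $B\neq 0$. The first term of $B$, the control influence $\frac{\partial h(\tau,\cdot)}{\partial x}\frac{\partial p(\tau;\cdot)}{\partial x}g$ evaluated \emph{at the maximizer}, is not covered by the theorem's nonvanishing hypothesis (which deliberately excludes $\mathscr{M}$) and may be zero. The second term, $-m'(\boldsymbol{R}-t)\,C_1\,g$ with $C_1$ from \eqref{eq:def_C1}, is evaluated at the root $\eta=\boldsymbol{R}_1(\tau;\cdot)\in(t,t+T)$. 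Because $\eta$ is a root rather than a maximizer, $\eta\in(t,t+T)\setminus\mathscr{M}(t,x)$, so $\frac{\partial h(\eta,\cdot)}{\partial x}\frac{\partial p(\eta;\cdot)}{\partial x}g\neq 0$ by the nonvanishing hypothesis; moreover $\boldsymbol{R}-t\in(0,T)$ gives $m'(\boldsymbol{R}-t)>0$, and the bracket inverted in \eqref{eq:def_C1} is the total $\tau$-derivative of $h$ at the up-crossing root, hence positive. So the root contribution to $B$ is genuinely nonzero, and $B$ could vanish only through cancellation of its two terms. This is precisely what the sign condition $\frac{\partial h(\tau,\cdot)}{\partial x}^{\mathrm{T}}\frac{\partial h(\eta,\cdot)}{\partial x}\geq 0$ prevents: probing $B$ in the root direction produces a cross term that this hypothesis forces to be nonnegative, so the two contributions cannot annihilate. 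Hence $B\neq 0$ and $\inf_u\frac{d}{dt}H^*=-\infty$ in Case~i.

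The boundary cases are treated directly. If $\boldsymbol{M}^*=t$ (a Case~iii endpoint), then by Theorem~\ref{thm:subset} $H^*=h(t,x)$, and since $\tau=t$ is a local maximizer of $h(\tau,p(\tau;t,x))$ under the nominal input, the choice $u=\mu(t,x)$ yields $\frac{d}{dt}H^*=\frac{d}{dt}h(t,x)\big|_{u=\mu}\leq 0\leq\alpha(-H^*)$ for every $\alpha\in\mathcal{K}$, irrespective of whether control authority is present; this robustly absorbs the higher-relative-degree situation. For a maximizer pinned at the horizon end $\tau=t+T$ (Cases~ii and~iii), the additional term $\frac{d}{d\tau}[h]\frac{d\tau}{dt}$ arises because the envelope cancellation valid for interior maximizers fails; here the growth bound $\frac{d}{d\tau}[h]\leq\gamma$ keeps the uncontrolled part of $\frac{d}{dt}H^*$ finite, while the root contribution keeps the $B$-type coefficient nonzero by the same nonvanishing and sign reasoning as in Case~i, so the control again dominates. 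Finally I would select a single $\alpha\in\mathcal{K}$ majorizing the uniformly bounded uncontrolled derivative over the finitely many configurations in which control is lost, using that on the boundary $H^*=0$ the uncontrolled derivative is already nonpositive (so $\alpha(0)=0$ suffices) with strictly positive slack available in the interior.

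I expect the principal obstacle to be the non-cancellation step in Case~i: turning the raw sign condition on $\frac{\partial h}{\partial x}$ at the maximizer and at the root into nonvanishing of $B$ requires tracking how that alignment is transmitted through the path Jacobians $\frac{\partial p}{\partial x}$ and the input matrix $g$, and confirming that the relevant cross term stays nonnegative after these transformations. A secondary technical point is that the maximizer location $\tau(t,x)$ need not be differentiable when it sits at the endpoint $t+T$---the reason $H^*$ is only assumed absolutely continuous---so the instants at which a maximizer enters or leaves the horizon must be argued to form a measure-zero set on which the CBF condition may be ignored.
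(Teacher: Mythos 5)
Your proposal follows essentially the same route as the paper's proof: verify Definition~\ref{def:cbf} via the case split on where $\boldsymbol{M}^*$ lies and whether $\boldsymbol{R}(\boldsymbol{M}^*;\cdot)=\boldsymbol{M}^*$, use Lemma~\ref{lemma:computing} for the affine-in-$u$ derivative, show $B\neq 0$ in the unsafe cases by applying the nonvanishing hypothesis at the root $\eta=\boldsymbol{R}_1\notin\mathscr{M}$ together with the sign condition to rule out cancellation, fall back on $u=\mu$ with the $m'$ and $\gamma$ bounds in the safe and horizon-end cases, and close with a single $\alpha\in\mathcal{K}$ satisfying $\alpha(m(\lambda))\geq m'(\lambda)$ and $\alpha(m(T))\geq\gamma$. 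The one step you flag as the ``principal obstacle'' (transmitting the gradient-alignment condition through $\frac{\partial p}{\partial x}$ and $g$) is indeed asserted rather than fully argued in the paper as well, so your plan matches the published argument in both substance and its weakest link.
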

\begin{proof}
By Assumption~\ref{as:differentiable}, $H^*$ is absolutely continuous, and therefore differentiable almost everywhere. Thus, $H^*$ is a CBF {as in Definition~\ref{def:cbf}} if there exists $\alpha\in\mathcal{K}$ satisfying condition \eqref{eq:cbf_definition} almost everywhere. We will show that such an $\alpha\in\mathcal{K}$ always exists, by breaking this proof into cases where A) $\boldsymbol{R}(\boldsymbol{M}^*(t,x);t,x) = \boldsymbol{M}^*(t,x)$ and B) $\boldsymbol{R}(\boldsymbol{M}^*(t,x);t,x) \neq \boldsymbol{M}^*(t,x)$. We further break both A and B into the cases 1) $\boldsymbol{M}^*(t,x) = t$, 2)  $\boldsymbol{M}^*(t,x) \in (t, t+T)$, and 3) $\boldsymbol{M}^*(t,x) = t+T$. Note that while Lemma~\ref{lemma:computing} applies to any $\tau \in \boldsymbol{M}(t,x)$, in this theorem, we only consider $\tau = \boldsymbol{M}^*(t,x)$, so $\frac{d\tau(t,x)}{dt} \equiv \frac{d\boldsymbol{M}^*(t,x)}{dt}$ in this proof.

In cases A1-A3, the path $p(\tau;t,x)$ is safe for {future times} $\tau$ {until at least $\boldsymbol{M}^*(t,x)$}, so intuitively choosing $u = \mu(t,x)$ should render the system safe, thereby satisfying \eqref{eq:cbf_definition}. We show this formally as follows.
First, case A1 (orange line in Fig.~\ref{fig:explanation}) implies that $\tau = \boldsymbol{M}^*(t,x) = t$ is a local maximizer of $h(\tau,p(\tau;t,x))$ on $\tau \in [t,t+T]$. This implies that $h$ is initially nonincreasing along the path $p$ beginning from $(t,x)$, so $\frac{d}{d\tau}\left.[h(\tau,p(\tau;t,x)]\right|_{\tau=t} \leq 0$. Moreover, if the agent continues along the path (i.e. chooses $u = \mu(t,x)$), then at an arbitrarily small time $t+\epsilon$ in the future, $\boldsymbol{M}^*(t + \epsilon,p(t+\epsilon;t,x)) = t+\epsilon$ will be the first local maximizer on the horizon $[t+\epsilon,t+\epsilon+T]$ {(see also $\mathcal{B}_{loc}(0)$ in \cite[Fig.~2]{Automatica})}, so $\frac{d\boldsymbol{M}^*(t,x)}{dt} = 1$ under $u = \mu(t,x)$. Thus, \eqref{eq:computational_theorem3} tells us 
that $\frac{d}{dt}[H^*(t,x)] =\frac{d}{d\tau}\left.[h(\tau,p(\tau;t,x)]\right|_{\tau=t}\leq 0$ under $u = \mu(t,x)$, which satisfies condition \eqref{eq:cbf_definition} for any $\alpha \in \mathcal{K}$.

Next, in case A2 (magenta line in Fig.~\ref{fig:explanation}), \eqref{eq:computational_theorem} implies that under $u = \mu(t,x)$, it follows that $\frac{d}{dt}[H^*(t,x)] = m'(\boldsymbol{M}^*(t,x) - t)$. Moreover, case A assumes that $\boldsymbol{R}(\boldsymbol{M}^*(t,x);t,x) = \boldsymbol{M}^*(t,x)$, which by \eqref{eq:def_R} implies that the predicted maximum value $h(\boldsymbol{M}^*(t,x),p(\boldsymbol{M}^*(t,x);t,x))$ is at most zero, so $H^*(t,x) \leq -m(\boldsymbol{M}^*(t,x) - t) < 0$.
Thus, choose $\alpha \in \mathcal{K}$ such that $\alpha(m(\lambda)) \geq m'(\lambda), \forall \lambda \in [0,T]$, and then \eqref{eq:cbf_definition} is satisfied under $u = \mu(t,x)$. Note that if $h(\boldsymbol{M}^*(t,x),p(\boldsymbol{M}^*(t,x);t,x)) = 0$, then $u = \mu(t,x)$ will cause $H^*$ to reach (but not exceed since $\boldsymbol{M}^*$ is a maximizer) the origin in finite time. Thus, a function $\alpha \in \mathcal{K}$ satisfying the above condition will not be Lipschitz continuous (recall that a Lipschitz $\alpha$ will not allow finite-time convergence to the origin), which is why we needed Lemma~\ref{lemma:class_k}.

Next, in case A3, two behaviors are possible at the end of the horizon. First, if $h$ is nonincreasing along the path at the end of the horizon (cyan line in Fig.~\ref{fig:explanation}), i.e. $\frac{d}{d\tau}\left. [h(\tau,p(\tau;t,x))]\right|_{\tau = t+T} = 0$, then under $u = \mu(t,x)$, at some arbitrarily small time $t+\epsilon$ in the future, $\boldsymbol{M}^*(t+\epsilon,p(t+\epsilon;t,x)) = \boldsymbol{M}^*(t,x)$ will still be a local maximizer. Thus, $\frac{d\boldsymbol{M}^*(t,x)}{dt} = 0$ under $u = \mu(t,x)$, so \eqref{eq:computational_theorem3} implies that $\frac{d}{dt}[H^*(t,x)] = m'(\boldsymbol{M}^*(t,x) - t)$, and the same logic as in case A2 implies satisfaction of condition \eqref{eq:cbf_definition}. Second, if instead $\frac{d}{d\tau}\left. [h(\tau,p(\tau;t,x))]\right|_{\tau = t+T} > 0$ (purple line in Fig.~\ref{fig:explanation}), then under $u = \mu(t,x)$, at some arbitrarily small time $t+\epsilon$ in the future, $\boldsymbol{M}^*(t+\epsilon,p(t+\epsilon;t,x)) = \boldsymbol{M}^*(t,x)+\epsilon$ will be the local maximizer on the horizon $[t+\epsilon,t+\epsilon+T]$. Thus,  $\frac{d\boldsymbol{M}^*(t,x)}{dt} = 1$ under $u = \mu(t,x)$, so \eqref{eq:computational_theorem3} implies that $\frac{d}{dt}[H^*(t,x)] = \frac{d}{d\tau}\left.[h(\tau,p(\tau;t,x))]\right|_{\tau=t+T} \leq \gamma$. Similar to case A2, $H^*(t,x) \leq -m(\boldsymbol{M}^*(t,x) - t) = -m(T) < 0$, so choose $\alpha$ so that $\alpha(m(T)) \geq \gamma$, and then condition \eqref{eq:cbf_definition} is satisfied for $u = \mu(T,x)$.

Next, note that case B1 is not possible, since \eqref{eq:def_R} implies that $\boldsymbol{R}(t;t,x) = t$, which would imply case A1. 
Next, cases B2-B3 
imply that $h(\boldsymbol{M}^*(t,x),p(\boldsymbol{M}^*(t,x);t,x)) > 0$; i.e the path $p(\tau;t,x)$ becomes unsafe on the horizon $[t,\boldsymbol{M}^*]\subseteq [t,t+T]$, so unlike cases A1-A3, we will ensure safety by taking a control action $u \neq \mu(t,x)$. Since $h(\boldsymbol{M}^*(t,x),p(\boldsymbol{M}^*(t,x);t,x)) > 0$, any $x \in \mathcal{S}_H^*(t)$ must satisfy $\boldsymbol{R}(\boldsymbol{M}^*(t,x);t,x) > t$, so we have until time $\boldsymbol{R}(\boldsymbol{M}^*(t,x);t,x)$ to take corrective action.
First, in case B2, both the maximizer time $\boldsymbol{M}^*$ and the root $\boldsymbol{R}$ occur within the time horizon (green line in Fig.~\ref{fig:explanation}), and $\frac{d}{dt}[H^*(t,x)]$ is given by \eqref{eq:computational_theorem}. Since $m'$ is assumed to be strictly positive and $\frac{\partial h(\eta, p(\eta; t,x))}{\partial x}\frac{\partial p(\eta; t,x)}{\partial x} g(t,x)$ is assumed to not be the zero vector, the second line of \eqref{eq:def_B} is guaranteed nonzero. Additionally, since we assumed the inner product of $\frac{\partial h(\tau, p(\tau;t,x))}{\partial x}$ and $\frac{\partial h(\eta, p(\eta;t,x))}{\partial x}$ is nonnegative for $\tau = \boldsymbol{M}^*(t,x)$ and $\eta = \boldsymbol{R}(\boldsymbol{M}^*(t,x);t,x)$, the first line of \eqref{eq:def_B} cannot cancel with the second line, so the matrix $B(\tau;t,x)$ in \eqref{eq:def_B} is guaranteed nonzero. Since $u \in \reals^m$ is unconstrained, we can thus choose $u$ to make $\frac{d}{dt}[H^*(t,x)]$  in \eqref{eq:computational_theorem} arbitrarily negative, so \eqref{eq:cbf_definition} can be satisfied for any $\alpha \in \mathcal{K}$.

Lastly, case B3 represents the scenario where the path becomes unsafe for some time $\boldsymbol{R}(\boldsymbol{M}^*(t,x);t,x) \in (t,t+T)$ in the prediction horizon, and the maximizer of $h(\tau,p(\tau;t,x))$ occurs either at (red line in Fig.~\ref{fig:explanation}) or after (blue line in Fig.~\ref{fig:explanation}) the end of the horizon. For this case to occur, it must be that $\frac{d}{d\tau}\left. [h(\tau,p(\tau;t,x))] \right|_{\tau=t+T} \geq 0$. However, by assumption, $\frac{d}{d\tau}[h(\tau,p(\tau;t,x))] \leq \gamma$ also. Since our prediction horizon $T$ is constant, $\frac{d\boldsymbol{M}^*(t,x)}{dt} \leq 1$, so the first line of \eqref{eq:computational_theorem2} is upper bounded by $\gamma$. The term $m'(\boldsymbol{R}(\tau;t,x) - t)$ is also independent of $u$ and is bounded since $m$ is assumed to be continuously differentiable. Therefore, by the same argument as in case B2, we can choose $u$ to make $\frac{d}{dt}[H^*(t,x)]$ in \eqref{eq:computational_theorem2} arbitrarily negative, so \eqref{eq:cbf_definition} can be satisfied for any $\alpha \in \mathcal{K}$.

Thus, in every case, a function $\alpha\in\mathcal{K}$ satisfying $\alpha(m(\lambda)) \geq m'(\lambda), \forall \lambda \in [0,T]$ and $\alpha(m(T)) \geq \gamma$ will satisfy condition \eqref{eq:cbf_definition}, so $H^*$ is indeed a CBF. Note that these are sufficient, not necessary conditions on $\alpha$.
\end{proof}

That is, under mild assumptions, it is always possible to render the set $\mathcal{S}_H^*$ forward invariant, so we call $H^*$ as in \eqref{eq:def_Hstar} a PCBF. To elaborate on these assumptions, {first, $\gamma$ represents a bound on the rate of change of $h$ along the nominal path $p$, which is a reasonable assumption for most paths. Next,} intuitively, the assumption that $\| \frac{\partial h(\eta, p(\eta; t,x))}{\partial x}\frac{\partial p(\eta; t,x)}{\partial x} g(t,x) \| \neq 0$ is analogous to controllability and observability. 
This quantity is the sensitivity of the future value of $h$ to the current control input, so this assumption encodes that both the state trajectory $p(\eta;\cdot)$ and the observed values $h(\eta,p(\eta;\cdot))$ along that trajectory must be controllable.
If instead {future} $h$ {values are} not controllable along $p$, then we cannot expect such a predictive strategy using the path function $p$ to be useful. That said, we make an exception and allow for $\| \frac{\partial h(\tau, p(\tau; t,x))}{\partial x}\frac{\partial p(\tau; t,x)}{\partial x} g(t,x) \| = 0$ to occur when $\tau$ is a maximizer time {(i.e. $\tau\in\mathscr{M}(t,x)$)}. This is because one can easily construct a path for which the maximum value $\max_{\tau \in [t,t+T]}h(\tau,p(\tau;t,x))$ of $h$ is constant with $t$ and $x$; for example, a path that attempts to converge to an unsafe state. In this case, no control input will change the maximum value of $h$ along the path, but the assumption that $\| \frac{\partial h(\eta, p(\eta; t,x))}{\partial x}\frac{\partial p(\eta; t,x)}{\partial x} g(t,x) \| \neq 0$ for all $\eta \notin \boldsymbol{M}(t,x)$ encodes that it is still possible to delay the time $\boldsymbol{R}(\tau;t,x)$ at which the state trajectory first becomes unsafe by choosing a control input $u$ different from the nominal input $\mu(t,x)$.

The related assumption that $\frac{\partial h(\tau, p(\tau;t,x))}{\partial x}^\textnormal{\textrm{T}} \frac{\partial h(\eta, p(\eta;t,x))}{\partial x} \geq 0$ means that if the path is such that it is possible to change both 1) the maximum value of $h$ achieved along the path (first term of \eqref{eq:def_hp}) and 2) the first time along the path at which the trajectory becomes unsafe (second term of \eqref{eq:def_hp}), then there exists a control action which decreases both terms of \eqref{eq:def_hp} simultaneously. For example, a car moving towards a stop sign and decelerating at $|u|$ larger than $|\mu|$ will stop both \textit{shorter} and \textit{sooner} than the same car decelerating at $|\mu|$. That is, this assumption excludes the use of dynamics that would cause the car to instead stop \textit{further} and \textit{sooner}, or \textit{shorter} and \textit{later}.

In particular, we note that we have not made any assumptions on the relative-degree of $h$. For many systems, such as those in Section~\ref{sec:simulations}, this method will generate a CBF $H^*$ even when $h$ is not a CBF. \extendedversion{That is, using only the constraint function $h$, the dynamics $f$ and $g$, and a nominal, not necessarily safe, control law $\mu(t,x)$, one can directly utilize the sensitivities \eqref{eq:computational_theorem}-\eqref{eq:computational_theorem3} of the nominal trajectory $p$ to construct a PCBF and compute a safe trajectory.} Note also that Theorems~\ref{thm:subset}-\ref{thm:is_a_cbf} do not assume that the system actually follows the ``nominal'' trajectory $p(\tau;t,x)${, even when this trajectory is safe}. Rather, this trajectory is just used to generate the PCBF. However, we only expect a controller to proactively ensure safety better than traditional CBFs when $H^*$ is constructed using a path that is relevant to the system's {intended} behavior. Thus, one possible safe controller is
\begin{equation}
    u(t,x) = \argmin_{u \in K_\textrm{cbf}(t,x)} \| u -\mu(t,x) \|^2 \,, \label{eq:the_qp}
\end{equation}
where {$K_\textrm{cbf}$ in \eqref{eq:cbf_condition} uses $\varphi = H^*$ and where} we assume that \eqref{eq:the_qp} is sufficiently regular for Lemma~\ref{prior:cbf_invariance}.

Next, while $H^*$ in \eqref{eq:def_Hstar} is sufficient for enforcing safety, to make the controller even more proactive, one may want to consider future safety for all elements of $H$ in \eqref{eq:def_H} simultaneously. However, it is in general difficult to show that $H$ in \eqref{eq:def_H} is also a CBF (with appropriate generalizations for $H$ being set-valued), because there may not exist control inputs such that the CBF condition can be satisfied simultaneously for every output ${H}_i$. That said, it may still be advantageous to construct the CBF condition for every future time-of-interest and use slack penalties for violation of the CBF condition for $\tau_i\in\boldsymbol{M}\setminus\boldsymbol{M}^*$ in case of a conflict. \extendedversion{Additionally, instead of enforcing the CBF condition on every ${H}_i$, one can reduce the size of $H$ by instead considering the maximal value of $H$ \cite{Glotfelter2}, the smooth maximum of $H$ \cite{CBFs_STL}, or by using hysteresis tolerances to make certain elements of $H$ inactive \cite{Automatica}.}

\extendedversion{\begin{remark}
To avoid needing to run a line-search for maximizers and roots in \eqref{eq:def_M}-\eqref{eq:def_R1}, one could instead sample $\boldsymbol{M}$ as many discrete time points in the finite horizon (e.g. every $\Delta T$ time point), similar to MPC. With a suitable robustness margin for the discretization, this could also be used to guarantee safety in a similar manner. This strategy would increase the number of constraints on the control input, but would still be simpler than MPC approaches because only the current control input is {an optimization variable in \eqref{eq:the_qp}}.
\end{remark}}

We now present simulations demonstrating the utility of the PCBF $H^*$.

\section{Simulations} \label{sec:simulations}

\subsection{Autonomous Vehicles} \label{sec:vehicles}

Our first case study involves two cars passing through a four-way intersection. We assume that the cars are fixed in their lanes $l_1:\reals\rightarrow\reals^2$ and $l_2:\reals\rightarrow\reals^2$, respectively, with locations $z_1$ and $z_2$ along their lanes. Thus, the position of car~1 on the road is $l_1(z_1)$ and the position of car~2 is $l_2(z_2)$. For simplicity, we model the cars as double-integrators: $\ddot{z}_1 = u_1$ and $\ddot{z}_2 = u_2$, resulting in state vector $x = [z_1;\, \dot{z}_1;\, z_2;\, \dot{z}_2]$ and control vector $u = [u_1; u_2]$. Suppose the cars nominally want to travel in their lanes at velocities $v_1$ and $v_2$, respectively, so the nominal control input is $\mu = [\mu_1; \mu_2]$ where $\mu_i(t,x) = k(v_i - \dot{z}_i)$ for some gain $k > 0$. The path function is then $p = [p_1;\,p_2]$ where $p_i$ is the solution to \eqref{eq:def_path} under $\mu$:
\begin{equation}
    \hspace{-4pt} p_i(\tau;t,x) = \begin{bmatrix} z_i + v_i(\tau - t) + \frac{\dot{z}_i - v_i}{k}(1 - e^{-k(\tau - t)}) \\ v_i + (\dot{z}_i - v_i)e^{-k(\tau-t)} \end{bmatrix}. \hspace{-4pt}
\end{equation}
Let the safety constraint be $h = \rho - \|l_1(z_1) - l_2(z_2)\|$.

\begin{figure}
    \centering
    \includegraphics[width=\columnwidth]{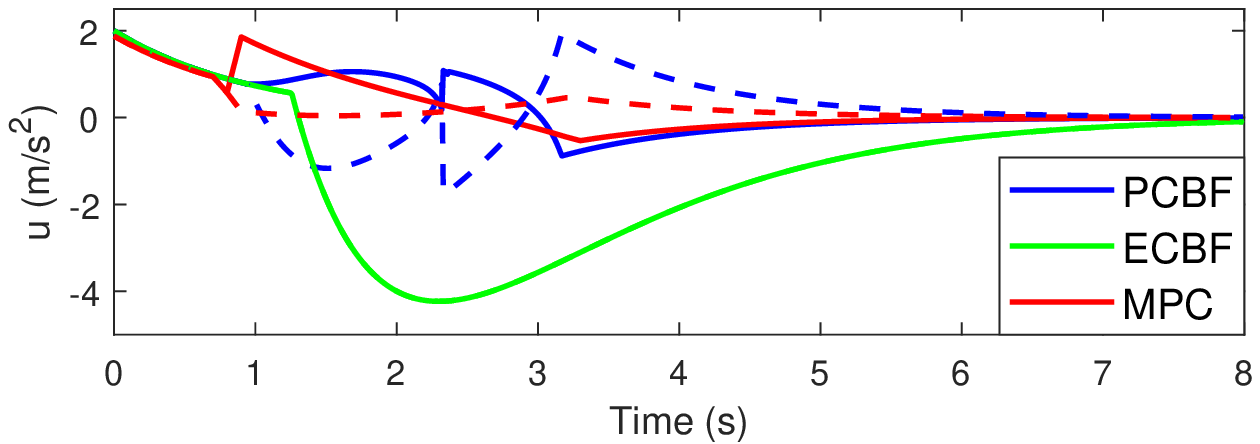}
    \caption{Control inputs of two vehicles with safety determined by an ECBF, PCBF, or by MPC. The solid lines are $u_1$ (acceleration of vehicle 1) and the dashed lines are $u_2$ (acceleration of vehicle 2).}
    \label{fig:control}
\regularversion{\vspace{10pt}}\extendedversion{\vspace{7pt}}
    \centering
    \includegraphics[width=\columnwidth]{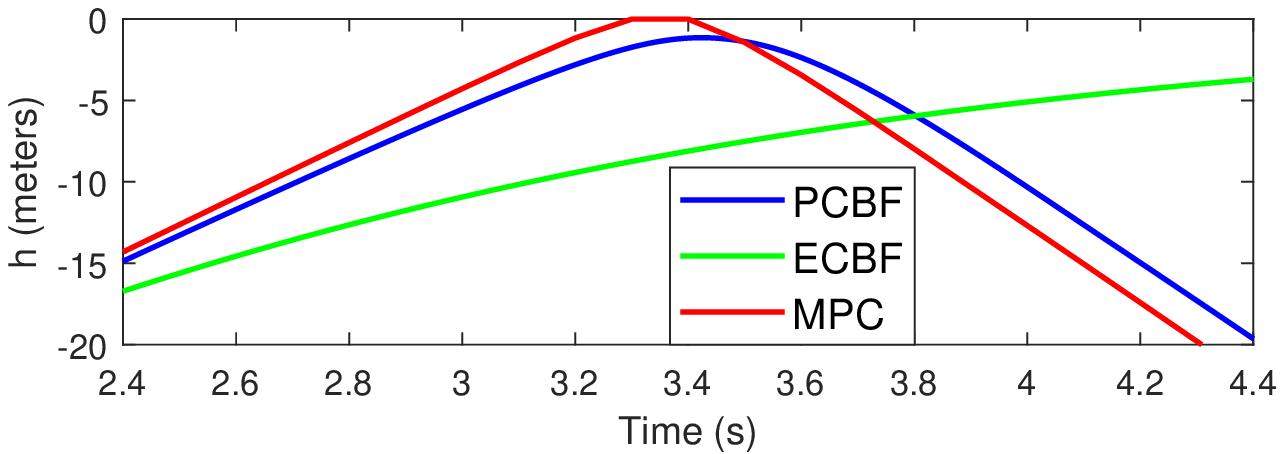}
    \caption{The values of $h$ during the three simulations in Fig.~\ref{fig:control}. The PCBF and MPC trajectories are similar, whereas the ECBF trajectory slowly converges to zero as the vehicles come to a complete stop. {See also the animation\textsuperscript{\protect\hyperlink{note:car}{1}}.}}
    \label{fig:safety}
    \vspace{-12pt}
\end{figure}

We then constructed a PCBF of the form $H^*$, and simulated two intersection scenarios using the controller \eqref{eq:the_qp}, one with two cars intersecting perpendicularly, and one with one car driving straight and a second car turning left {and passing through $l_1$}. The results were very similar, so we focus on the left-turn case. For comparison, we also simulated this case with safety governed by an Exponential CBF \cite{Exponential_CBF} and via nonlinear MPC using the same prediction horizon. The control inputs are shown in Fig.~\ref{fig:control} and the safety constraint is shown in Fig.~\ref{fig:safety}. Videos of every simulation\footnote{\hypertarget{note:car}{\url{https://youtu.be/0tVUAX6MCno}}} and all parameters and simulation code\footnote{\label{note:code}\url{https://github.com/jbreeden-um/phd-code/tree/main/2022/CDC\%20Predictive\%20CBFs}} are available below. From Figs.~\ref{fig:control}-\ref{fig:safety}, we see that the PCBF and MPC performed generally similarly, with both cars approaching close and then continuing on opposite sides of the intersection. On the other hand, the ECBF caused both agents to come to a complete stop, so neither agent made it through the intersection. On average, the computation time per control computation was 0.0034~s for the ECBF, 0.050~s for the PCBF, and 0.98~s for the MPC approach, {all on a 2.6 GHz laptop CPU, though the code\footnoteref{note:code} for all three cases could likely be further optimized}.

\subsection{Satellite Collision Avoidance} \label{sec:satellite}

Our second case study involves two satellites in Low Earth Orbit avoiding collision.  Let $r_1$ and $r_2$ denote the satellite positions and let $h(t,x) = \rho - \| r_1 - r_2 \|$ where $\rho = 1\textrm{ km}$. Let the satellites be governed by two body dynamics. This scenario is not well solved by existing CBF methods, or gradient methods in general, because the satellites necessarily orbit at large velocities, so $h$ may rapidly change from very negative to zero. If the satellites wait to take evasive maneuvers until $h$ is nearly zero, then large control actions will be necessary to maintain safety. However, very small control actions taken approximately a half orbit preceding a predicted collision (when $h$ is most negative) will result in large changes to $r_1$ and/or $r_2$ at the predicted collision time. Therefore, we expect the strategy in this paper to perform better than traditional CBFs.\extendedversion{\footnotetext[3]{{\hypertarget{note:satellite}{\url{https://youtu.be/HhtWUG63BWY}}}}}

\begin{figure}
    \centering
    \includegraphics[width=\columnwidth]{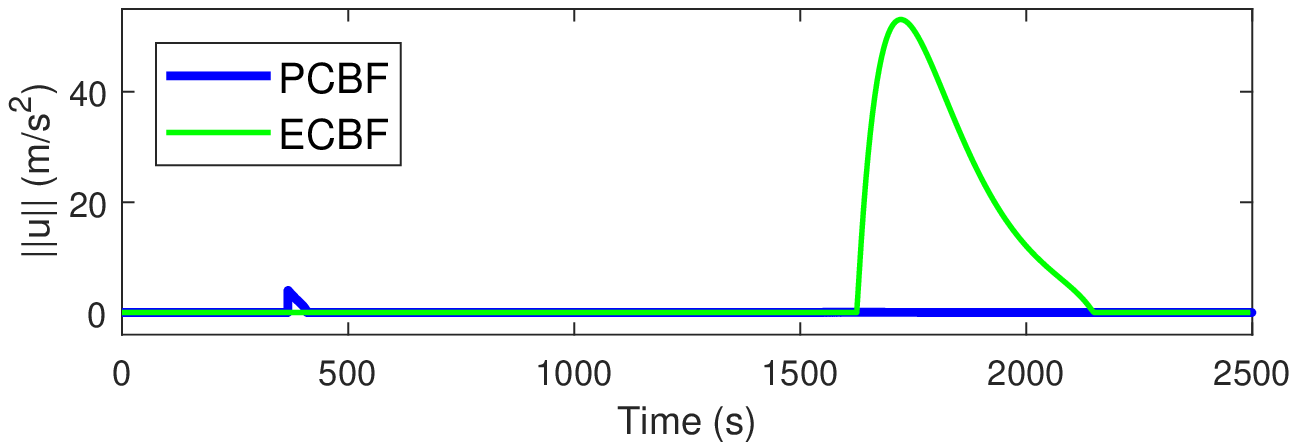}
    \caption{Control thrusts of a satellite with safety determined by an ECBF or PCBF.}
    \label{fig:control_space}
%
\regularversion{\vspace{10pt}}\extendedversion{\vspace{7pt}}
    \centering
    \begin{tikzpicture}
        \node[anchor=south west,inner sep=0] (image) at (0,0) {\includegraphics[width=\columnwidth]{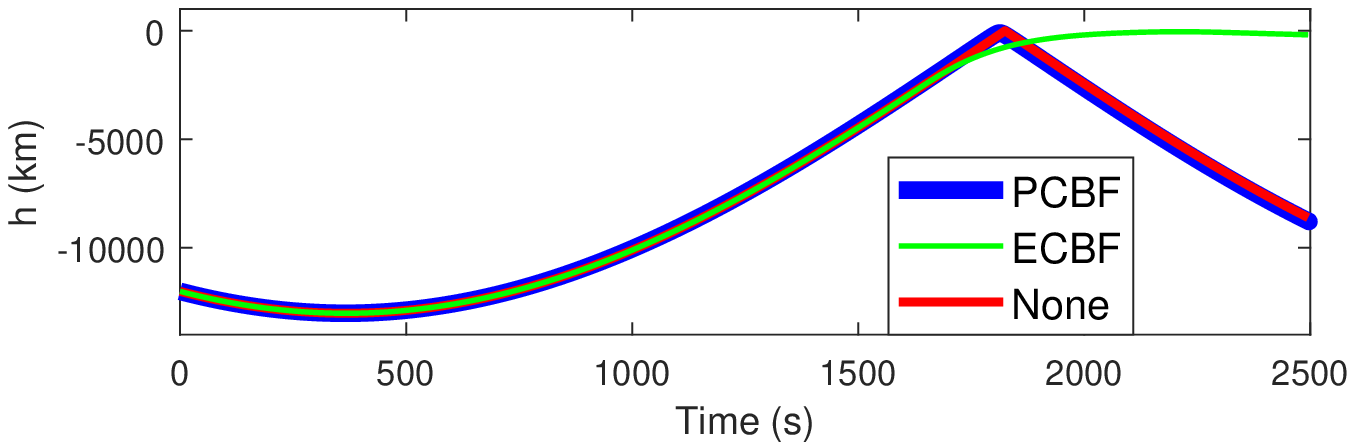}};
        \node[anchor=south west,inner sep=0] (image) at (1.25,1.05) 
        {\includegraphics[width=0.75in,trim={0in, .46in, 0in, .14in},frame,clip]{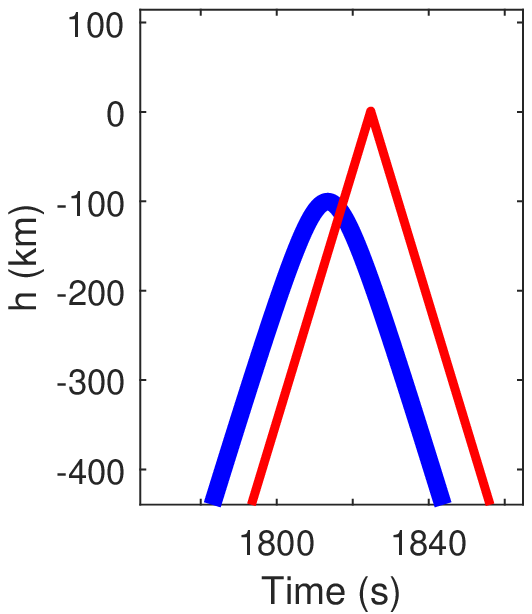}}; 
    \end{tikzpicture}
    \caption{The values of $h$ during the two simulations in Fig.~\ref{fig:control_space} and a simulation with zero control input. The PCBF minimally modifies the original trajectory so the red and blue lines are very similar, but the red line exceeds the safe set by 1 km (see zoomed in inset {or the animation\textsuperscript{\protect\hyperlink{note:satellite}{3}}}), while {the} blue and green lines remain safe.}
    \label{fig:safety_space}
    \vspace{-12pt}
\end{figure}

For this case study, let $x = [r_1; \dot{r}_1]$ represent the state of a controlled satellite and let $r_2$ be an uncontrolled piece of debris with a known orbit. Let $p$ be the trajectory of the controlled satellite under zero control input{, which is a well-characterized elliptical orbit}. {The simulation scenario places the controlled satellite and the debris initially very far apart, but in orbits that eventually intersect if no control action is taken.} Simulations under $H^*$, under an ECBF, and under no control action are shown in Figs.~\ref{fig:control_space}-\ref{fig:safety_space} {and in the video below\regularversion{\footnote{\hypertarget{note:satellite}{\url{https://youtu.be/HhtWUG63BWY}}}}}\extendedversion{\hyperlink{note:satellite}{\footnotemark[3]}}. Note how the PCBF trajectory (blue) takes a small control action as soon as the unsafe prediction enters the horizon at $t=367\textrm{ s}$, and then is very similar to the nominal trajectory (red). On the other hand, the ECBF trajectory (green) takes control action much later, when over 10 times as much thrust is required. This avoidance problem could {in theory also} be solved with MPC, but would require a very fine discretization, because $h > 0$ occurs for only 0.14 seconds since the satellites are moving so fast.  {{Thus,} utilizing the same length of prediction horizon} {would require more than $10^4$ samples, making the MPC problem (and most other online trajectory search algorithms) intractable.}


\section{Conclusion} \label{sec:conclusion}

We have developed a systematic approach for propagating state trajectories into the future along a nominal path, and then adjusting the trajectory if the propagated path is found to be unsafe. The approach reduces the degree to which the CBF condition intrudes on the nominal path compared to a traditional CBF in simulation, and performs similarly to an MPC approach while reducing computation times by an order of magnitude and avoiding the need for linearizations. {The satellite collision avoidance simulation in particular shows how the PCBF can also solve problems that are not well handled by either traditional CBFs or MPC.} Future work includes extending this approach to consider input constraints, and extending the safety guarantees to collections of agents with distributed controllers.

\bibliographystyle{ieeetran}
\bibliography{sources}

\begin{thebibliography}{10}
\providecommand{\url}[1]{#1}
\csname url@rmstyle\endcsname
\providecommand{\newblock}{\relax}
\providecommand{\bibinfo}[2]{#2}
\providecommand\BIBentrySTDinterwordspacing{\spaceskip=0pt\relax}
\providecommand\BIBentryALTinterwordstretchfactor{4}
\providecommand\BIBentryALTinterwordspacing{\spaceskip=\fontdimen2\font plus
\BIBentryALTinterwordstretchfactor\fontdimen3\font minus
  \fontdimen4\font\relax}
\providecommand\BIBforeignlanguage[2]{{%
\expandafter\ifx\csname l@#1\endcsname\relax
\typeout{** WARNING: IEEEtran.bst: No hyphenation pattern has been}%
\typeout{** loaded for the language `#1'. Using the pattern for}%
\typeout{** the default language instead.}%
\else
\language=\csname l@#1\endcsname
\fi
#2}}

\bibitem{CBF_Tutorial}
A.~D. {Ames}, S.~{Coogan}, M.~{Egerstedt}, G.~{Notomista}, K.~{Sreenath}, and
  P.~{Tabuada}, ``Control barrier functions: Theory and applications,'' in
  \emph{2019 18th European Control Conference}, 2019, pp. 3420--3431.

\bibitem{approximate_optimal_control}
M.~H. Cohen and C.~Belta, ``Approximate optimal control for safety-critical
  systems with control barrier functions,'' in \emph{2020 59th IEEE Conference
  on Decision and Control (CDC)}, 2020, pp. 2062--2067.

\bibitem{multilayer_icra}
R.~Grandia, A.~J. Taylor, A.~D. Ames, and M.~Hutter, ``Multi-layered safety for
  legged robots via control barrier functions and model predictive control,''
  in \emph{2021 IEEE International Conference on Robotics and Automation
  (ICRA)}, 2021, pp. 8352--8358.

\bibitem{Automatica}
\BIBentryALTinterwordspacing
J.~Breeden and D.~Panagou, ``Robust control barrier functions under high
  relative degree and input constraints for satellite trajectories,''
  \emph{arXiv}, 2021. [Online]. Available:
  \url{https://arxiv.org/abs/2107.04094v2}
\BIBentrySTDinterwordspacing

\bibitem{mpc_with_backup_aircraft}
H.~Kim, H.~Yoon, W.~Wan, N.~Hovakimyan, L.~Sha, and P.~Voulgaris, ``Backup plan
  constrained model predictive control,'' in \emph{2021 60th IEEE Conference on
  Decision and Control}, 2021, pp. 289--294.

\bibitem{multirate_design_cbfs_mpc}
U.~Rosolia and A.~D. Ames, ``Multi-rate control design leveraging control
  barrier functions and model predictive control policies,'' \emph{IEEE Control
  Systems Letters}, vol.~5, no.~3, pp. 1007--1012, 2021.

\bibitem{Discrete_MPC_CBFs}
J.~Zeng, B.~Zhang, and K.~Sreenath, ``Safety-critical model predictive control
  with discrete-time control barrier function,'' in \emph{2021 American Control
  Conference}, 2021, pp. 3882--3889.

\bibitem{chemical_control_mpc_cbf}
Z.~Wu, F.~Albalawi, Z.~Zhang, J.~Zhang, H.~Durand, and P.~D. Christofides,
  ``Control lyapunov-barrier function-based model predictive control of
  nonlinear systems,'' \emph{Automatica}, vol. 109, 2019.

\bibitem{discrete_backup_cbf}
K.~P. Wabersich and M.~N. Zeilinger, ``Predictive control barrier functions:
  Enhanced safety mechanisms for learning-based control,'' \emph{IEEE
  Transactions on Automatic Control}, 2022.

\bibitem{Predictive_CBF}
E.~{Squires}, P.~{Pierpaoli}, and M.~{Egerstedt}, ``Constructive barrier
  certificates with applications to fixed-wing aircraft collision avoidance,''
  in \emph{2018 IEEE Conference on Control Technology and Applications}, 2018,
  pp. 1656--1661.

\bibitem{backup_controller}
T.~{Gurriet}, M.~{Mote}, A.~D. {Ames}, and E.~{Feron}, ``An online approach to
  active set invariance,'' in \emph{2018 IEEE Conference on Decision and
  Control}, 2018, pp. 3592--3599.

\bibitem{backup_control_multi_robot}
Y.~Chen, A.~Singletary, and A.~D. Ames, ``Guaranteed obstacle avoidance for
  multi-robot operations with limited actuation: A control barrier function
  approach,'' \emph{IEEE Control Systems Letters}, vol.~5, no.~1, pp. 127--132,
  2021.

\bibitem{CDC21}
J.~Breeden and D.~Panagou, ``High relative degree control barrier functions
  under input constraints,'' in \emph{2021 60th IEEE Conference on Decision and
  Control}, 2021, pp. 6119--6124.

\bibitem{backup_tutorial}
Y.~Chen, M.~Jankovic, M.~Santillo, and A.~D. Ames, ``Backup control barrier
  functions: Formulation and comparative study,'' in \emph{2021 60th IEEE
  Conference on Decision and Control}, 2021, pp. 6835--6841.

\bibitem{feasible_merging_cbfs}
\BIBentryALTinterwordspacing
K.~Xu, W.~Xiao, and C.~G. Cassandras, ``Feasibility guaranteed traffic merging
  control using control barrier functions,'' 2022. [Online]. Available:
  \url{https://arxiv.org/abs/2203.04348v1}
\BIBentrySTDinterwordspacing

\bibitem{feasible_hocbfs}
W.~Xiao, C.~A. Belta, and C.~G. Cassandras, ``Sufficient conditions for
  feasibility of optimal control problems using control barrier functions,''
  \emph{Automatica}, vol. 135, p. 109960, 2022.

\bibitem{Exponential_CBF}
Q.~{Nguyen} and K.~{Sreenath}, ``Exponential control barrier functions for
  enforcing high relative-degree safety-critical constraints,'' in \emph{2016
  American Control Conference}, July 2016, pp. 322--328.

\bibitem{hocbf_journal}
W.~Xiao and C.~Belta, ``High-order control barrier functions,'' \emph{IEEE
  Trans. on Automatic Control}, vol.~67, no.~7, pp. 3655--3662, 2022.

\bibitem{safe_learning_cbfs_proactive}
Z.~Marvi and B.~Kiumarsi, ``Safe off-policy reinforcement learning using
  barrier functions,'' in \emph{2020 American Control Conference (ACC)}, 2020,
  pp. 2176--2181.

\bibitem{mitchells_paper}
\BIBentryALTinterwordspacing
M.~Black, M.~Jankovic, A.~Sharma, and D.~Panagou, ``Intersection crossing with
  future-focused control barrier functions,'' \emph{arXiv}, 2022. [Online].
  Available: \url{https://arxiv.org/abs/2204.00127v1}
\BIBentrySTDinterwordspacing

\bibitem{CBFs_STL}
L.~{Lindemann} and D.~V. {Dimarogonas}, ``Control barrier functions for signal
  temporal logic tasks,'' \emph{IEEE Control Systems Letters}, vol.~3, no.~1,
  pp. 96--101, 2019.

\bibitem{nonlipschitz}
N.~Rouche, P.~Habets, and M.~Laloy, \emph{Stability Theory by Liapunov’s
  Direct Method}.\hskip 1em plus 0.5em minus 0.4em\relax Springer-Verlag, 1977.

\bibitem{Glotfelter2}
P.~{Glotfelter}, J.~{Cortés}, and M.~{Egerstedt}, ``Boolean composability of
  constraints and control synthesis for multi-robot systems via nonsmooth
  control barrier functions,'' in \emph{2018 IEEE Conference on Control
  Technology and Applications (CCTA)}, 2018, pp. 897--902.

\end{thebibliography}

\section{Appendix} 
\regularversion{\allowdisplaybreaks}
\begin{proof}[Proof of Lemma~\ref{lemma:computing}]
First, note that every $\tau$ in $\boldsymbol{M}(t,x)$ is a function $\tau(t,x)$ defined as the maximizer of $h$ along the path originating from $(t,x)$ and restricted to a sufficiently small domain around $\tau$ (see \cite[Fig. 2]{Automatica} for a related visualization). 

We start by recognizing that the computation of $h(\tau,p(\tau;t,x))$ can be broken into two parts by an intermediate time $\sigma$ as follows
\begin{equation}
    h(\tau, p(\tau; t,x)) = h(\tau, p(\tau; \sigma, p(\sigma; t,x))) . \label{eq:h_sigma}
\end{equation}
Differentiate both sides of \eqref{eq:h_sigma} w.r.t $\sigma$ and set $\sigma = t$ to get
\begin{equation}
    0 = \frac{\partial h(\tau, p(\tau; \cdot))}{\partial x}\left[ \frac{\partial p(\tau; \cdot)}{\partial t} + \frac{\partial p(\tau; \cdot)}{\partial x} \frac{\partial p(t;\cdot)}{\partial \tau} \right] \,,  \label{eq:maximizer_nominal}
\end{equation}
where $(\cdot) = (t,x)$. 
We then compute the total derivative of the first term of \eqref{eq:def_hp} as\regularversion{{\cite{extended_version}}}
\extendedversion{follows
\begin{align}
    & \hspace{-8pt}\frac{d h(\tau, p(\tau; t,x))}{dt} = \frac{\partial h(\tau,p(\tau;\cdot))}{\partial t} \frac{d \tau}{d t} \nonumber \\ &\;\;\;\;\; + \frac{\partial h(\tau,p(\tau;\cdot))}{\partial x}\left[ \frac{\partial p(\tau;\cdot)}{\partial \tau}\frac{d \tau}{dt} + \frac{\partial p(\tau;\cdot)}{\partial t} + \frac{\partial p(\tau;\cdot)}{\partial x}\frac{dx}{dt}\right] \nonumber \\
 	&\hspace{-2pt}\overset{\eqref{eq:maximizer_nominal}}{=} \left[ \frac{\partial h(\tau,p(\tau;\cdot))}{\partial t} + \frac{\partial h(\tau,p(\tau;\cdot))}{\partial x}\frac{\partial p(\tau;\cdot)}{\partial \tau} \right] \frac{d \tau}{dt} \nonumber \\ &\;\;\; + \frac{\partial h(\tau,p(\tau;t,x))}{\partial x}\frac{\partial p(\tau;t,x)}{\partial x}\left[ \frac{dx}{dt} - \frac{\partial p(t;t,x)}{\partial \tau} \right] \nonumber \\
 	&\hspace{-2pt}\overset{\eqref{eq:model},\eqref{eq:def_path}}{=} \frac{d}{d\tau}[h(\tau,p(\tau;t,x))] \frac{d \tau}{dt}\nonumber \\ &\;\;\; + \frac{\partial h(\tau,p(\tau;t,x))}{\partial x}\frac{\partial p(\tau;t,x)}{\partial x} g(t,x) (u - \mu(t,x)). \hspace{-3pt} \label{eq:final_form_local}
\end{align}}%
\regularversion{\begin{align}
    & \hspace{-8pt}\frac{d h(\tau, p(\tau; t,x))}{dt} =  \frac{d}{d\tau}[h(\tau,p(\tau;t,x))] \frac{d \tau}{dt}\nonumber \\ &\;\;\; + \frac{\partial h(\tau,p(\tau;t,x))}{\partial x}\frac{\partial p(\tau;t,x)}{\partial x} g(t,x) (u - \mu(t,x)). \hspace{-3pt} \label{eq:final_form_local}
\end{align}
{For the full steps of the simplifications in this proof, see \cite[Sec.~VI]{extended_version}.}}
Note that \eqref{eq:final_form_local} holds in all three cases in Lemma~\ref{lemma:computing}.

Next, consider the second term of \eqref{eq:def_hp}, which has total derivative
\extendedversion{\begin{multline}
    \frac{d m(\boldsymbol{R}(\tau;t,x) - t)}{dt} = \\ m'(\boldsymbol{R}(\tau;t,x) - t) \left( \frac{d \boldsymbol{R}(\tau;t,x)}{dt} - 1\right) \,, \label{eq:diff_second_term}
\end{multline}}%
\regularversion{\begin{equation}
    \frac{d m(\boldsymbol{R}(\tau;t,x) - t)}{dt} \hspace{-1pt}=\hspace{-1pt} m'(\boldsymbol{R}(\tau;t,x) - t) \left( \frac{d \boldsymbol{R}(\tau;t,x)}{dt} - 1\right), \label{eq:diff_second_term}
\end{equation}}%
for which we need to know $\frac{d}{dt}[\boldsymbol{R}(\tau;t,x)]$. {Note that $x\in\mathcal{S}_H^*(t)$ implies that $\boldsymbol{R}(\tau;t,x)$ is well defined for all $\tau \geq t$.}
In the case where $\boldsymbol{R}(\tau;t,x) = \tau$, then $\frac{d}{dt}[\boldsymbol{R}(\tau;t,x)] = \frac{d\tau(t,x)}{dt}$. 
If instead $\boldsymbol{R}(\tau;t,x) \neq \tau$, then by \eqref{eq:def_R}, $\eta = \boldsymbol{R}_1(\tau;t,x)$ is a zero of $h(\eta,p(\eta;t,x))$ over $\eta$ in a neighborhood preceding $\tau$. That is,
\begin{equation}
    0 = h(\boldsymbol{R}_1(\tau;t,x),p(\boldsymbol{R}_1(\tau;t,x);t,x)) \,,
\end{equation}
which we can differentiate with respect to $t$ to get
\begin{multline}
    \hspace{-10pt}\frac{\partial h(\boldsymbol{R}_1,p(\boldsymbol{R}_1;t,x))}{\partial t} \frac{d \boldsymbol{R}_1}{dt} + \frac{\partial h(\boldsymbol{R}_1,p(\boldsymbol{R}_1;t,x))}{\partial x} \hspace{-1.5pt}\left[ \frac{\partial p(\boldsymbol{R}_1;t,x)}{\partial \tau} \right. \\ \left. \cdot\, \frac{d\boldsymbol{R}_1}{dt}  + \frac{\partial p(\boldsymbol{R}_1;t,x)}{\partial t} + \frac{\partial p(\boldsymbol{R}_1;t,x)}{\partial x} \frac{dx}{dt} \right] = 0\label{eq:diff_R} 
\end{multline}
and then simplify\regularversion{\cite{extended_version}} to find
\extendedversion{\begin{align}
    &\hspace{-10pt} \frac{d \boldsymbol{R}_{1\hspace{-1pt}}(\tau;t,x)}{d t} \hspace{-1.5pt}\overset{\eqref{eq:maximizer_nominal},\eqref{eq:diff_R}}{=}\hspace{-1.5pt} -  \frac{\partial h(\boldsymbol{R}_{1\hspace{-1pt}},p(\boldsymbol{R}_{1\hspace{-1pt}};\cdot))}{\partial x} \frac{\partial p(\boldsymbol{R}_{1\hspace{-1pt}};\cdot)}{\partial x} \hspace{-1.5pt}\left[ \frac{dx}{dt} \hspace{-1.5pt}-\hspace{-1.5pt} \frac{\partial p(t;\cdot)}{\partial \tau} \right] \nonumber \\ & \;\;\;\;\; \cdot \left( \frac{\partial h(\boldsymbol{R}_{1\hspace{-1pt}},p(\boldsymbol{R}_{1\hspace{-1pt}};\cdot))}{\partial t}  + \frac{\partial h(\boldsymbol{R}_{1\hspace{-1pt}},p(\boldsymbol{R}_{1\hspace{-1pt}};\cdot))}{\partial x}\frac{\partial p(\boldsymbol{R}_{1\hspace{-1pt}};\cdot)}{\partial \tau} \right)^{-1} \nonumber \\ 
    &\hspace{-10pt}\overset{\eqref{eq:model},\eqref{eq:def_path}}{=} - \frac{\partial h(\boldsymbol{R}_{1\hspace{-1pt}},p(\boldsymbol{R}_{1\hspace{-1pt}};\cdot))}{\partial x} \frac{\partial p(\boldsymbol{R}_{1\hspace{-1pt}};\cdot)}{\partial x} g(t,x)(u - \mu(t,x)) \nonumber \\ &\; \cdot \left( \frac{\partial h(\boldsymbol{R}_{1\hspace{-1pt}},p(\boldsymbol{R}_{1\hspace{-1pt}};\cdot))}{\partial t}  \hspace{-1.5pt}+\hspace{-1.5pt} \frac{\partial h(\boldsymbol{R}_{1\hspace{-1pt}},p(\boldsymbol{R}_{1\hspace{-1pt}};\cdot))}{\partial x}\frac{\partial p(\boldsymbol{R}_{1\hspace{-1pt}};\cdot)}{\partial \tau} \right)^{-1} \hspace{-8pt} \label{eq:final_dR}
\end{align}}%
\regularversion{\begin{align}
    &\hspace{-10pt} \frac{d \boldsymbol{R}_{1\hspace{-1pt}}(\tau;t,x)}{d t} \hspace{-1.5pt}= - \frac{\partial h(\boldsymbol{R}_{1\hspace{-1pt}},p(\boldsymbol{R}_{1\hspace{-1pt}};\cdot))}{\partial x} \frac{\partial p(\boldsymbol{R}_{1\hspace{-1pt}};\cdot)}{\partial x} g(t,x)(u - \mu(t,x)) \nonumber \\ &\; \cdot \left( \frac{\partial h(\boldsymbol{R}_{1\hspace{-1pt}},p(\boldsymbol{R}_{1\hspace{-1pt}};\cdot))}{\partial t}  \hspace{-1.5pt}+\hspace{-1.5pt} \frac{\partial h(\boldsymbol{R}_{1\hspace{-1pt}},p(\boldsymbol{R}_{1\hspace{-1pt}};\cdot))}{\partial x}\frac{\partial p(\boldsymbol{R}_{1\hspace{-1pt}};\cdot)}{\partial \tau} \right)^{-1} \hspace{-8pt} \label{eq:final_dR}
\end{align}}%
where \eqref{eq:final_dR} is used to define $C_1(\boldsymbol{R}_1,t,x)$ in \eqref{eq:def_C1}. 

The total derivative $\frac{d}{dt}[h(\tau(t,x),p(\tau(t,x);t,x))]$ in \eqref{eq:computational_theorem}-\eqref{eq:computational_theorem3} is then the difference between \eqref{eq:final_form_local} and \eqref{eq:diff_second_term} where we make the following simplifications.

\emph{Case i:} In this case, $\tau$ is a maximizer of $h(\tau,p(\tau;t,x))$ on an open interval, and $h$ and $p$ are assumed to be continuously differentiable, so a necessary condition for $\tau \in \boldsymbol{M}(t,x)$ is that $\frac{d}{d\tau}[h(\tau,p(\tau;t,x))] = 0$. Therefore, the first line of \eqref{eq:final_form_local} is zero in Case i and does not appear in \eqref{eq:computational_theorem}. The remaining second line of \eqref{eq:final_form_local} is incorporated into the first line of \eqref{eq:def_B}.

Next, since $\tau \in (t,t+T)$, by Assumption~\ref{as:differentiable}, $\tau(t,x)$ is continuously differentiable. Similar to \eqref{eq:h_sigma}, we can introduce the intermediate time $\sigma$ where
\begin{equation}
    \tau(t,x) = \tau(\sigma,p(\sigma;t,x)) \,. \label{eq:tau_sigma}
\end{equation}
Differentiate both sides of \eqref{eq:tau_sigma} w.r.t $\sigma$ and set $\sigma = t$ \regularversion{similar to \eqref{eq:h_sigma}-\eqref{eq:final_form_local}, and then simplify \cite{extended_version}} to get
\extendedversion{\begin{equation}
    0 = \frac{\partial \tau(t,x)}{\partial t} + \frac{\partial \tau(t,x)}{\partial x}\frac{\partial p(t;t,x)}{\partial \tau} \,. \label{eq:tau_nominal}
\end{equation}
We can then describe the variation in the maximizer location $\tau$ with respect to the current time and state as
\begin{align}
    \frac{d\tau(t,x)}{dt} &= \frac{\partial \tau(t,x)}{\partial t} + \frac{\partial \tau(t,x)}{\partial x}\frac{dx}{dt} \nonumber \\
    &\overset{\eqref{eq:tau_nominal}}{=} \frac{\partial \tau(t,x)}{\partial x}\left[ \frac{dx}{dt} - \frac{\partial p(t;t,x)}{\partial \tau} \right] \nonumber \\ 
    &\overset{\eqref{eq:model},\eqref{eq:def_path}}{=} \frac{\partial \tau(t,x)}{\partial x}g(t,x)(u - \mu(t,x)) \,. \label{eq:diff_tau}
\end{align}}%
\regularversion{\begin{equation}
    \frac{d\tau(t,x)}{dt} = \frac{\partial \tau(t,x)}{\partial x}g(t,x)(u - \mu(t,x)) \,. \label{eq:diff_tau}
\end{equation}}%
Note that \eqref{eq:diff_tau} is only valid if $\tau(t,x)$ is continuously differentiable, which is why we need different formulas for Cases ii-iii.
When $\boldsymbol{R}(\tau;t,x) = \tau$, the simplification \eqref{eq:diff_tau} yields the first case of $C(\tau,t,x)$ in \eqref{eq:def_C}. When instead $\boldsymbol{R}(\tau;t,x) \neq \tau$, the simplification \eqref{eq:final_dR} yields the second case of $C(\tau,t,x)$ in \eqref{eq:def_C}. Both cases are then incorporated into the second line of \eqref{eq:def_B}. Thus, \eqref{eq:computational_theorem} accounts for all the terms of the difference between \eqref{eq:final_form_local} and \eqref{eq:diff_second_term}.

\emph{Case ii:} In this case, $\frac{d}{d\tau}[h(\tau,p(\tau;t,x))]$ is no longer guaranteed to be zero, so we add the first line of \eqref{eq:final_form_local} back to \eqref{eq:computational_theorem2} compared to \eqref{eq:computational_theorem}. Since $\tau$ is an endpoint of the horizon, $\tau$ is no longer assumed continuously differentiable so $\frac{\partial \tau(t,x)}{\partial x}$ may no longer be well-defined. Thus, the simplification in \eqref{eq:diff_tau} may no longer hold, so we do not simplify $\frac{d\tau(t,x)}{dt}$ from \eqref{eq:final_form_local} any further. Next, note that since Case~ii assumes $\boldsymbol{R}(\tau;t,x) < \tau$, the second case of \eqref{eq:def_C} (first case of \eqref{eq:def_R}) will always hold in Case~ii (recall {that $\frac{\partial \tau(t,x)}{\partial x}$ in} the first case of \eqref{eq:def_C} may no longer be defined), so we can use the same definition of $B(\tau,t,x)$ as in Case~i. 
Thus, \eqref{eq:computational_theorem2} accounts for all the terms of the difference between \eqref{eq:final_form_local} and \eqref{eq:diff_second_term}.

\emph{Case iii:} This case is identical to Case ii, except that $\boldsymbol{R}(\tau;t,x) = \tau$ so $\frac{d}{dt}[\boldsymbol{R}(\tau;t,x)]$ is equivalent to $\frac{d\tau(t,x)}{dt}$ instead of coming from \eqref{eq:final_dR} via $C(\tau,t,x)$ in \eqref{eq:def_C}. Thus, the first two lines of \eqref{eq:computational_theorem3} accounts for all the terms of \eqref{eq:final_form_local}, while the last line of \eqref{eq:computational_theorem3} accounts for all the terms of \eqref{eq:diff_second_term}.
\end{proof}

\end{document}